\newtheorem{lemma}{Lemma}
\newtheorem{theorem}{Theorem}
\newtheorem{atheo}{Theorem}
\newtheorem{definition}{Definition}
\newtheorem{proposition}{Proposition}
\newtheorem{remark}{Remark}
\newcommand{\R}{\mathbb{R}}
\newcommand{\ds}{\displaystyle}
\begin{document}

%-------------------------------------------------------------------------
% editorial commands: to be inserted by the editorial office
%
%\firstpage{1} \volume{228} \Copyrightyear{2004} \DOI{003-0001}
%
%
%\seriesextra{Just an add-on}
%\seriesextraline{This is the Concrete Title of this Book\br H.E. R and S.T.C. W, Eds.}
%
% for journals:
%
%\firstpage{1}
%\issuenumber{1}
%\Volumeandyear{1 (2004)}
%\Copyrightyear{2004}
%\DOI{003-xxxx-y}
%\Signet
%\commby{inhouse}
%\submitted{March 14, 2003}
%\received{March 16, 2000}
%\revised{June 1, 2000}
%\accepted{July 22, 2000}
%
%
%
%---------------------------------------------------------------------------
%Insert here the title, affiliations and abstract:
%

\title[Curved fronts in a shear flow: case of combustion nonlinearities]
 {Curved fronts in a shear flow: case of combustion nonlinearities}
\date{}

%----------Author 1
\author[M. El Smaily]{Mohammad El Smaily \footnote{M. El Smaily is supported in part by the Natural Sciences and Engineering Research Council of Canada through  NSERC Discovery Grant RGPIN-2017-04313}}

\address{
Department of Mathematics \& Statistics, \br University of New Brunswick,\br Fredericton, NB, E3B5A3, Canada.}

\email{m.elsmaily@unb.ca}

%\thanks{This work was completed with the support of our
%\TeX-pert.}
%----------Author 
%----------classification, keywords, date
\subjclass{35B40, 35B50, 35J60.}

\keywords{Curved fronts, combustion nonlinearity, reaction-advection-diffusion, conical traveling waves}

\date{}
%----------additions
%%\dedicatory{To my boss}
%%% ----------------------------------------------------------------------
\maketitle
\begin{abstract} We prove the existence and uniqueness, up to a shift in time, of curved traveling fronts for a reaction-advection-diffusion equation with a combustion-type nonlinearity. The advection is through a shear flow $q$. This analyzes, for instance, the shape of flames produced by a Bunsen burner in the presence of advection. We also give a formula for the speed of propagation of these conical fronts in terms of the well-known speed of planar pulsating traveling waves.\hfill\break
\end{abstract}

\section{Introduction and main results}
This paper is concerned with the existence, uniqueness and qualitative properties of curved traveling waves solutions to the reaction-advection-diffusion problem  \begin{equation}\label{ueq}
\partial_{t}u(t,x,y)=\Delta_{x,y} u + q(x)\partial_y u(t,x,y)+f(u) \qquad \hbox{for all $t \in \mathbb{R}, \,(x,y)\in \mathbb{R}^2$} 
\end{equation} 
and satisfy certain limiting properties as the vertical direction $y$ goes to $\pm\infty.$ The advection coefficient $x\mapsto q(x)$ belongs to $C^{1,\delta}(\mathbb R)$ for some $\delta>0$ and satisfies the periodicity and normalization conditions
\begin{equation}\label{cq}
\forall\,x\in\mathbb{R},\quad q(x+L)=q(x) \quad \hbox{and} \quad \displaystyle{\int_{0}^L q(x)\;dx=0} \hbox{ for some }L>0. 
\end{equation}
Thus, the advection field  $\tilde q(x,y)=(0,q(x))$ is divergence free and is of ``shear-flow'' type.  

\noindent The function $f$ is Lipschitz-continuous in [0,1], continuously differentiable in a left neighbourhood $(1-r,1]$ of  1 and satisfies 
\begin{equation}\label{cf}
\exists\, \theta\in (0,1); ~f\equiv0 \hbox { on }[0,\theta]\cup\{1\},~ f>0 \hbox{ on }(\theta,1) \hbox{ and }f'(1)<0.
\end{equation}
We extend $f$ by 0 outside $[0,1]$. Hence, $f$ is Lipschitz-continuous on $\R$. From standard elliptic estimates, any bounded solution $u$ of \eqref{ueq} is of class $C^{2,\delta}(\R^{2})$ for any $\delta\in[0,1)$. We will often refer to this class of functions as ``{\em combustion-type}'' nonlinearities and the parameter $\theta$ is to stand for  the {\em ignition-temperature}.

In this work, we  are interested in solutions of (\ref{ueq}) that are curved traveling fronts  which have the form
$$u(t,x,y)=\phi(x,y+ct)$$
for all $(t,x,y)\in\mathbb R\times\mathbb R^2$, and for some positive constant $c$ which denotes the speed of propagation in the vertical direction~$-y$. Thus, we are led to the following elliptic equation
\begin{equation}\label{cequation}
\Delta\phi+(q(x)-c)\partial_{y}\phi+f(\phi)=0 \;\hbox{ for all }\;(x,y)\in\mathbb{R}^2.
\end{equation}
The word ``curved'' appearing in the name of  these solutions comes from the requirement that they satisfy the following conical limiting conditions
\begin{equation}\label{concon}
\displaystyle{\lim_{l\rightarrow-\infty}}\Big(\displaystyle{\sup_{(x,y)\in C^{-}_{\alpha,l}}}\phi(x,y)\Big)= 0\quad \text{and} \quad 
\displaystyle{\lim_{l\rightarrow\infty}\Big(\inf_{(x,y)\in C^{+}_{\alpha,l}}\phi(x,y)\Big)= 1,}\end{equation}
where $\alpha$  is given in $(0,\pi)$ and the lower and upper cones~$C^{-}_{\alpha,l}$ and~$C^{+}_{\alpha,l}$ are defined as follows:
\begin{definition}\label{cones} Let $\alpha \in (0,\pi).$ For every real number $l$, the lower cone $C^{-}_{\alpha,l}$ is defined by
$$\begin{array}{cl}
&C^{-}_{\alpha,l}=\big\{(x,y)\in\mathbb{R}^2,~~ y\leq x\cot\alpha+l~\hbox{ whenever }~ x\leq0\vspace{3pt}\\
&\hskip3cm\hbox{ and }~ y\leq-x\cot\alpha+l ~\hbox{ whenever }~ x\geq0\big\}\end{array}$$
and then the upper cone $C^{+}_{\alpha,l}$ is defined by
$$C^{+}_{\alpha,l}=\overline{\mathbb{R}^2 \setminus C^{-}_{\alpha,l}}\,.$$
\end{definition}
Before we go further, let us explain briefly why would one be interested in such curved-fronts. Equation \eqref{cequation} or its equivalent parabolic version \eqref{ueq} arise  in models of equi-diffusional premixed Bunsen flames, for instance. The function $u$ or $\phi$ represents a normalized temperature and its level sets represent the conical-shaped flame coming out of the Bunsen burner. The temperature of the unburnt gases is close to 0 and that of burnt gases is close to 1.  The real number $c$ can be interpreted as the speed of the gas at the exit of the burner (see the works \cite{Sivashinsky1} and \cite{Sivashinsky} by Sivashinsky and \cite{Williams} by Williams).  
 \subsection{Prior works}
Several works have considered the problem of conical fronts in various settings. Bu and Wang \cite{BuWangZhi, BuWang2}  consider the problem in 3 dimensions, in presence of a combustion-type nonlinearity, but without an advection term. They prove existence, uniqueness and asymptotic stability of three-dimensional pyramidal traveling fronts under certain conditions. In another work, Bu and Wang \cite{BuWang1} consider the problem in presence of advection, but with KPP-type nonlinearities (in contrast with combustion nonlinearities that we consider here). In \cite{BuWang1}, the authors generalize the results of \cite{ehh} to higher dimensions by proving existence of pyramidal fronts in dimensions 3 and 4.  Curved fronts were also studied in the case of bistable nonlinearities, though  without an advection term, in the works \cite{Taniguchi2005} and  \cite{Taniguchi2006} by Taniguchi and Ninomiya.  The authors of  \cite{Taniguchi2005} and  \cite{Taniguchi2006} studied the existence and stability of travelling curve fronts to the Allen-Cahn equation.  One of the earliest works on the conical-fronts question was that by Bonnet and Hamel \cite{BonnetHamel} and Hamel, Monneau \cite{HMonneau}. The results of \cite{BonnetHamel} were later generalized, to any dimension $N,$ by Roquejoffre, Hamel and Monneau \cite{HamelMonneauRoquejoffre04} which proved  the existence, and the global stability, of travelling waves solutions with conical- shaped level sets. The authors of \cite{HamelMonneauRoquejoffre04} also studied the same type of questions but for a bistable nonlinearity, instead of combustion-type nonlinearity, in the later work \cite{HamelMonneauRoquejoffre05}.

\subsection{Auxiliary problem:  pulsating fronts propagating to the left and to the right}
We start by recalling some known results about planar traveling fronts in the case of ignition nonlinearity of type \eqref{cf}. For each positive definite symmetric matrix $M,$ consider the following problem whose solutions are planar traveling fronts connecting $0$ to $1$:

 \begin{equation}\label{aux-u-eq}\begin{array}{rcl}
\frac{\partial u}{\partial t} & \!\!=\!\! & {\rm div}(M \nabla u)+q(X)\sin\gamma \frac{\partial u}{\partial Y}\!+\!f(u),\ \, t\in \mathbb R,\; (X,Y)\!\in\mathbb R^2,\vspace{3pt}\\
u(t\!+\!\tau,X\!+\!L,Y) & \!\!=\!\! & u(t\!+\!\tau,X,Y)=u(t,X,Y\!+\!c\tau),\ \,(t,\tau,X,Y)\!\in\mathbb R^2\!\times\!\mathbb R^2,\vspace{3pt}\\
u(t,X,Y) & \!\!\!\!\underset{Y\rightarrow -\infty}{\longrightarrow}\!\!\!\! &  0,\quad u(t,X,Y)\underset{Y\rightarrow \infty}{\longrightarrow} 1,\end{array}
\end{equation}
Note that the limiting conditions at $\pm\infty$ in \eqref{aux-u-eq} are not ``conical''. Moreover, the drift term depends only on the $X$ variable and the reaction term $f$ does not depend on the space variables. Thus, the ansatz $u(t,X,Y)=\varphi(X,Y+ct)$ requires that the pair $(c,\varphi)$ solves the following problem
\begin{equation}\label{aux-phi-eq}\begin{cases}
\hbox{div}(M \nabla \varphi)+(q(X)\sin\gamma-c){\partial_Y\varphi}+f(\varphi)=0, \quad (X,Y)\in\mathbb R^2,\\
\varphi(X,Y)\underset{Y\rightarrow -\infty}{\longrightarrow} 0,\quad \varphi(X,Y)\underset{Y\rightarrow \infty}{\longrightarrow} 1,\quad \hbox{uniformly in }X\in\mathbb R,\\
\varphi(X+L,Y)=\varphi(X,Y), \quad (X,Y)\in \mathbb R^2.\end{cases}
\end{equation}
A solution $(c,\varphi)$ of \eqref{aux-phi-eq} is known   as a {\it pulsating traveling front} in the vertical direction and the constant $c$ represents the speed. We recall the existence and uniqueness theorem of pulsating traveling fronts which follows from a more general result by Berestycki and Hamel \cite{bh02}:
\begin{atheo}[Berestycki, Hamel \cite{bh02}]\label{old.planar} If $q$ and $f$ satisfy \eqref{cq} and \eqref{cf}, then \eqref{aux-u-eq} or equivalently \eqref{aux-phi-eq} admits a pulsating traveling front $u(t,X,Y)=\varphi(X,Y+ct)$ and a unique speed of propagation $c=c_{M,q\sin\alpha,f}$. Furthermore, the traveling front solution $u$ is unique up to shifts in the time variable $t.$
\end{atheo}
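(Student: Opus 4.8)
The plan is to follow the strategy of Berestycki and Hamel for front propagation in periodic excitable media, specialized to the shear-flow geometry. First I would pass to the moving frame: a pulsating front $u(t,X,Y)=\varphi(X,Y+ct)$ corresponds exactly to a solution $(c,\varphi)$ of \eqref{aux-phi-eq}, where $\varphi$ is $L$-periodic in $X$ and hence may be regarded as a function on the cylinder $\Sigma:=(\R/L\Z)\times\R$. Existence is then a nonlinear eigenvalue problem: one must produce both the profile $\varphi$ and the speed $c$. Uniqueness will be handled separately by a sliding argument that exploits the ignition structure \eqref{cf}.

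For existence I would truncate $\Sigma$ to the finite cylinder $\Sigma_a:=(\R/L\Z)\times(-a,a)$ and solve, for each large $a$, the approximate problem
\begin{equation*}
\hbox{div}(M\nabla\varphi)+(q(X)\sin\gamma-c)\,\partial_Y\varphi+f(\varphi)=0\ \hbox{ in }\Sigma_a,\qquad \varphi(\cdot,-a)=0,\quad \varphi(\cdot,a)=1,
\end{equation*}
together with a normalization such as $\max_{\Sigma_a\cap\{Y=0\}}\varphi=\theta$, which simultaneously selects the speed $c=c_a$ and pins down the $Y$-translate. A solution of the truncated problem can be obtained by Leray--Schauder degree theory (or by a continuation/sub- and supersolution argument starting from the advection-free case), once the needed a priori bounds are in place: $0\le\varphi_a\le1$ by the maximum principle; a uniform two-sided estimate $0<\underline c\le c_a\le\overline c$, with the lower bound coming from $f>0$ on $(\theta,1)$ and the upper bound from an explicit supersolution; interior Schauder bounds on $\varphi_a$; and monotonicity $\partial_Y\varphi_a>0$ via the sliding method and the strong maximum principle.

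Letting $a\to\infty$, the a priori bounds and elliptic compactness give (along a subsequence) a solution $(c,\varphi)$ of the equation in \eqref{aux-phi-eq} on the full cylinder $\Sigma$, with $0\le\varphi\le1$, $\varphi$ nondecreasing in $Y$, and $\varphi(\cdot,0)$ still attaining the value $\theta$, so that $\varphi\not\equiv0,1$. The limits $\varphi(X,Y)\to0$ as $Y\to-\infty$ and $\varphi(X,Y)\to1$ as $Y\to+\infty$, uniformly in $X$, then follow: by monotonicity the limits exist and are $L$-periodic solutions of the $Y$-independent problem, and the combustion structure of $f$ (namely $f\equiv0$ on $[0,\theta]\cup\{1\}$, $f>0$ on $(\theta,1)$) excludes any intermediate plateau, while $f'(1)<0$ gives exponential saturation near $1$. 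Positivity $c>0$ comes from multiplying the equation by $\partial_Y\varphi$ and integrating over $\Sigma$, using $f>0$ on $(\theta,1)$. For uniqueness of $c$ and of $\varphi$ up to a shift in $Y$ (equivalently, of $u$ up to a shift in $t$), I would take two solutions $(c_1,\varphi_1)$, $(c_2,\varphi_2)$ with $c_1\ge c_2$; the exponential behaviour at $Y\to\mp\infty$ forced by \eqref{cf} makes $\varphi_1(X,Y+\tau)\ge\varphi_2(X,Y)$ hold for all large shifts $\tau$, and decreasing $\tau$ to its optimal value and invoking the strong maximum principle at a contact point forces $c_1=c_2$ and $\varphi_1(\cdot,\cdot+\tau_0)\equiv\varphi_2$.

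I expect the main obstacle to be the passage to the limit $a\to\infty$ without letting the front ``escape to infinity'': one has to ensure the normalization $\varphi_a(\cdot,0)\ni\theta$ survives in the limit (no degeneration to $0$ or $1$), which is precisely where the two-sided a priori bound on $c_a$ and a careful use of the ignition threshold are essential. The second delicate point is upgrading the pointwise limits at $Y=\pm\infty$ to \emph{uniform} limits in $X$, which relies on the periodicity together with the Liouville-type rigidity for the $Y$-independent equation.
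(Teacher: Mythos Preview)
The paper does not prove this statement at all: Theorem~A is quoted as a background result from Berestycki and Hamel \cite{bh02}, and the present paper simply invokes it to guarantee the existence and uniqueness of the planar pulsating fronts $(c_{A,q\sin\alpha,f},\varphi)$ and $(c_{B,q\sin\alpha,f},\psi)$ used in the sub- and supersolution construction. There is therefore no ``paper's own proof'' to compare your proposal against.

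That said, your sketch is a faithful outline of the Berestycki--Hamel strategy in \cite{bh02}: truncation to finite cylinders with a normalization pinning the front, Leray--Schauder degree (or continuation) to solve the truncated problem, uniform a priori bounds on the speed, monotonicity via sliding, passage to the limit $a\to\infty$, identification of the end states by a Liouville argument, and uniqueness of $(c,\varphi)$ up to shift by sliding and the strong maximum principle. The two delicate points you flag---non-degeneration of the front in the limit and uniformity of the limits at $Y=\pm\infty$---are indeed where the work lies in \cite{bh02}. For the purposes of the present paper, though, none of this needs to be reproduced; a citation suffices.
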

\noindent We mention that a variational min-max formula for the unique speed $c_{M,q\sin\alpha,f}$ of pulsating traveling fronts in the case of combustion nonlinearity is derived in El Smaily \cite{min-max}. Furthermore, the asymptotic behaviour of this speed in presence of a shear-flow drift term with a large amplitude has been studied in Hamel and Zlato\v{s} \cite{HamelZlatos}.  

 In what follows, we will use the diffusion matrices 
 \begin{equation}\label{matrixAandB}
 A=\left[
\begin{array}{cc}
 1 &   \cos \alpha \\
  \cos\alpha&  1   
\end{array}
\right] \hbox{ and } B=\left[
\begin{array}{cc}
 1 &   -\cos \alpha \\
  -\cos\alpha&  1   
\end{array}
\right].
\end{equation}

\noindent The following proposition clarifies the role of the symmetry assumption we placed on the advection term $q$. This in turn will allow us to construct a sub and supersolution which consist of the right and left moving fronts for our main problem \eqref{ueq} coupled with conditions \eqref{concon}.

\begin{proposition}[On the symmetry assumption $q(x)\equiv q(-x)$]
Suppose that $q(x)=q(-x)$ for all $x\in\mathbb R$. Then, in the above notation, we have
$c_{A,q\sin\alpha,f}=c_{B,q\sin\alpha,f}.$
\end{proposition}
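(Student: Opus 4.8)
The plan is to deduce the equality from a single reflection in the first variable, $X\mapsto -X$, which interchanges the diffusion matrices $A$ and $B$ (their only difference being the sign of the off-diagonal entry) while leaving the shear term untouched, precisely because $q$ is even.

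First I would invoke Theorem~A with $M=A$ and $\gamma=\alpha$: since $q$ and $f$ satisfy \eqref{cq} and \eqref{cf}, there are a speed $c:=c_{A,q\sin\alpha,f}$ and a function $\varphi$ solving \eqref{aux-phi-eq} with $M=A$; in particular $\varphi\in C^{2}$, $\varphi$ is $L$-periodic in $X$, and $\varphi(X,Y)\to 0$ as $Y\to-\infty$, $\varphi(X,Y)\to 1$ as $Y\to+\infty$, uniformly in $X\in\mathbb R$.

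Next I would set $\psi(X,Y):=\varphi(-X,Y)$ and verify that $(c,\psi)$ solves \eqref{aux-phi-eq} with $M=B$ and $\gamma=\alpha$. The reflection flips the sign of the mixed second derivative but not of $\partial_{XX}$ or $\partial_{YY}$, so that $\mathrm{div}(B\nabla\psi)(X,Y)=\mathrm{div}(A\nabla\varphi)(-X,Y)$; the drift term becomes $(q(X)\sin\alpha-c)\,\partial_Y\varphi(-X,Y)=(q(-X)\sin\alpha-c)\,\partial_Y\varphi(-X,Y)$ by evenness of $q$; and $f(\psi(X,Y))=f(\varphi(-X,Y))$. Summing, the left-hand side of the $B$-equation at $(X,Y)$ equals the left-hand side of the $A$-equation at $(-X,Y)$, hence it vanishes. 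The limits as $Y\to\pm\infty$ transfer verbatim, and $\psi(X+L,Y)=\varphi(-X-L,Y)=\varphi(-X,Y)=\psi(X,Y)$ gives the $L$-periodicity in $X$. Thus $u(t,X,Y)=\psi(X,Y+ct)$ is a pulsating traveling front for \eqref{aux-u-eq} with matrix $B$ and speed $c$.

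Finally, the uniqueness of the speed in Theorem~A forces $c_{B,q\sin\alpha,f}=c=c_{A,q\sin\alpha,f}$, which is the claim. I do not expect a genuine obstacle here: the only delicate points are the sign bookkeeping in $\mathrm{div}(M\nabla\cdot)$ under the reflection --- this is exactly where the passage from $A$ to $B$ takes place --- and noticing that the hypothesis $q(x)\equiv q(-x)$ is precisely what is needed to preserve the drift term, after which one must appeal to the \emph{uniqueness} (not merely the existence) part of Theorem~A.
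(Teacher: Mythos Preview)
Your proof is correct and follows exactly the same route as the paper: define $\psi(X,Y)=\varphi(-X,Y)$, use the reflection $X\mapsto -X$ (which turns $A$ into $B$ and relies on $q(-X)=q(X)$) to verify that $(c_{A,q\sin\alpha,f},\psi)$ solves \eqref{aux2}, and then appeal to the uniqueness of the speed in Theorem~A. Your write-up is in fact more explicit than the paper's on the sign bookkeeping and on the periodicity check.
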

\begin{proof}
Let $(c_{A,q\sin\alpha,f}, \varphi(X,Y))$ be the unique solution of the {\emph pulsating traveling front} problem
\begin{equation}\label{aux1}\begin{cases}
\hbox{\rm div}(A \nabla\varphi(X,Y))+(q(X)\sin\alpha-c_{A,q\sin\alpha,f}){\partial_Y\varphi(X,Y)}+f(\varphi)=0\hbox{ in }\mathbb R^2,\!\!\vspace{7pt}\\
\varphi(X,Y)\underset{Y\rightarrow -\infty}{\longrightarrow} 0,\quad\varphi(X,Y)\underset{Y\rightarrow \infty}{\longrightarrow} 1 \hbox{ uniformly in }X\in\mathbb R.\end{cases}
\end{equation}
Note that \eqref{aux1} is the corresponding equation to \eqref{aux-phi-eq} where $M$ is replaced by the matrix $A$. Then
define $\psi(X,Y):=\varphi(-X,Y)$ for all $(X,Y)\in\mathbb R^2$. Since $q(X)=q(-X)$ for all $X\in\mathbb R,$  the pair $(c_{A,q\sin\alpha,f}, \psi)$ is then a solution of the following problem
\begin{equation}\label{aux2}\begin{cases}
\hbox{\rm div}(B \nabla \psi(X,Y))+(q(X)\sin\alpha-c_{A,q\sin\alpha,f})\,{\partial_Y \psi(X,Y)}+f(\psi)=0\hbox{ in }\mathbb R^2,\vspace{7pt}\\
\psi(X,Y)\underset{Y\rightarrow -\infty}{\longrightarrow} 0,\quad \psi(X,Y)\underset{Y\rightarrow \infty}{\longrightarrow} 1 \hbox{ uniformly in }X\in\mathbb R.\end{cases}
\end{equation}
However, a solution of \eqref{aux2} is a pulsating traveling front corresponding to the diffusion matrix $B$ and propagating in the direction of $-e=(0,-1).$ 
As the reaction $f$ is of combustion type, we know  from \cite{bh02}, Theorem \ref{old.planar} above, that  problem (\ref{aux2}) admits a unique speed of propagation \emph{which we denoted  above by} $c_{B,q\sin\alpha,f}$ (\emph{\cite{bh02} also proves that the solutions 
$v(t,X,y):=\psi(X,Y+c_{B,q\sin\alpha,f}t)$ of the parabolic equation \[v_{t}=\nabla\cdot(B\nabla v)+q(X)\sin\alpha\partial_{Y}v+f(v),\] with the limiting conditions $\ds{\lim_{Y\rightarrow+\infty}v(t,X,Y)=1}$ and $\ds{\lim_{Y\rightarrow-\infty}v(t,X,Y)=0}$, are unique up to a shift in $t$}). Therefore the condition $q(x)=q(-x)$ leads to $c_{A,q(x)\sin\alpha,f}=c_{B,q(x)\sin\alpha,f}.$ \end{proof}

\subsection{Statement of main results}

\begin{theorem}[Existence and uniqueness]\label{existence}
Let $\alpha\in (0,\pi).$ Under the  assumptions \eqref{cq} and \eqref{cf} on $q$ and $f,$ there exists a unique speed $c$ and a solution $u(t,x,y)$ of the form $u(t,x,y)=\phi(x,y+ct)$ of  equation \eqref{ueq} which satisfies the conical conditions (\ref{concon}). Moreover, the curved traveling front  $u$ is unique up to a shift in $t$ and the speed $c$ is given by the formula
\begin{equation}\label{formula.of.c}
\displaystyle{c=\frac{c_{A,q\sin\alpha,f}}{\sin\alpha}=\frac{c_{B,q\sin\alpha,f}}{\sin\alpha},}
\end{equation}
where ${c_{A,q\sin\alpha,f}}$ is the unique speed of pulsating traveling fronts for the auxiliary problem \eqref{aux1}.
In other words, if $(c_{1}, u_{1})$ and $(c_{2},u_{2})$, with 
$u_{1}(t,x,y)=\Phi_{1}(x,y+c_{1}t)$  and $u_{2}(t,x,y)=\Phi_{2}(x,y+c_{2}t),$  solve \eqref{cequation} with the conical conditions \eqref{concon} on $\Phi_{1}$ and $\Phi_{2},$ then $c_{1}=c_{2}=c$ {\rm(}given in \eqref{formula.of.c}{\rm)} and $u_{1}(t,x,y)=u_{2}(t+\kappa,x,y)$ for some $\kappa\in\R.$ 
\end{theorem}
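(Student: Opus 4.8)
The plan is to split the argument into an existence part and a uniqueness part, with the sub/supersolution method at the heart of the existence proof. For existence, I would first use the symmetric version of $q$ — more precisely, I expect the proof to reduce to the case $q(x)\equiv q(-x)$, or to build the barriers directly — together with Theorem A applied to the diffusion matrices $A$ and $B$ in \eqref{matrixAandB}. The key observation is that a pulsating front $\varphi$ associated with matrix $A$, after the rotation/shear of coordinates that turns $\operatorname{div}(A\nabla\cdot)$ into the Laplacian, becomes a planar front travelling in a direction making angle $\alpha$ with the vertical; combined with its mirror image (the $B$-front travelling along the reflected direction), these two tilted planar fronts can be glued to produce a \emph{supersolution} of \eqref{cequation} whose level sets are asymptotically the cone $C^{-}_{\alpha,l}$. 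For the \emph{subsolution} I would take a suitable maximum (or a perturbed maximum, shifted downward and with a small correction near the tip of the cone) of the same two tilted fronts, checking that the combustion structure of $f$ — in particular $f\equiv 0$ on $[0,\theta]$ — makes the max of two subsolutions a subsolution away from the overlap region, and handling the tip by a standard bump correction. The speed $c$ is forced to be $c_{A,q\sin\alpha,f}/\sin\alpha$ precisely because the $Y$-direction in \eqref{aux1} corresponds, after rescaling the tilted coordinate, to vertical speed divided by $\sin\alpha$; the equality $c_{A,q\sin\alpha,f}=c_{B,q\sin\alpha,f}$ from the Proposition guarantees the two barriers propagate at the same vertical speed, which is what makes the gluing consistent.

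With the ordered pair sub $\le$ sup in hand, I would invoke the standard iteration scheme (monotone iteration between a sub- and supersolution, or a sliding/elliptic-regularity compactness argument) to produce a solution $\phi$ of \eqref{cequation} trapped between them; the squeeze then yields the conical limits \eqref{concon} automatically, and elliptic estimates give $\phi\in C^{2,\delta}$. One must also verify $0<\phi<1$ (strong maximum principle, using $f(0)=f(1)=0$) and that $\phi$ is genuinely non-planar, i.e. the chosen $\alpha$ is realized; this is inherited from the asymptotics of the barriers.

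For uniqueness I would argue by the sliding method. Given two solutions $(c_1,\Phi_1)$ and $(c_2,\Phi_2)$ with the same $\alpha$: first show $c_1=c_2$. The natural route is to compare each $\Phi_i$ with the tilted-planar-front barriers: the conical condition \eqref{concon} pins down the asymptotic shape, hence the asymptotic normal speed along the flat part of the cone, which must equal $c_i\sin\alpha = c_{A,q\sin\alpha,f}$ by the uniqueness of the planar pulsating speed in Theorem A; so $c_1=c_2=c$. Once the speeds agree, $\Phi_1$ and $\Phi_2$ solve the \emph{same} elliptic equation \eqref{cequation}. I would then slide $\Phi_2$ in the $y$-variable, set $\tau^* = \inf\{\tau : \Phi_2(x,y+\tau)\ge \Phi_1(x,y)\ \text{on }\R^2\}$, show this inf is finite and attained (using the conical asymptotics to control the behaviour at infinity, where both functions are uniformly close to $0$ or to $1$ and the linearization at $1$ has $f'(1)<0$, giving exponential decay one can compare), and then use the strong maximum principle / Hopf lemma to conclude that at $\tau^*$ the two functions coincide, i.e. $\Phi_1(x,y)=\Phi_2(x,y+\tau^*)$; translating back gives $u_1(t,x,y)=u_2(t+\kappa,x,y)$ with $\kappa=\tau^*/c$.

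The main obstacle I anticipate is the construction and verification of the barriers near the tip of the cone — the region where the two tilted fronts overlap and neither the "max is a subsolution" nor the "min is a supersolution" heuristic is clean, so one needs the combustion threshold $\theta$ and a carefully tuned correction term (a compactly supported bump, or a shift by $O(1)$ depending on the overlap width) to preserve the differential inequalities there; keeping track of the uniformity of the conical limits while doing this is the delicate point. The secondary difficulty is making the sliding argument rigorous at infinity along the cone's boundary, i.e. establishing the exponential-type estimates near $u\equiv 1$ (where $f'(1)<0$ helps) and near $u\equiv 0$ (where $f\equiv0$ on $[0,\theta]$ means one instead argues by a direct comparison with the barrier rather than by linearization).
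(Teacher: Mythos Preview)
Your outline is broadly on the right track (sub/supersolution for existence, sliding for uniqueness), but there is a genuine gap in the supersolution step, and a few of the surrounding heuristics are inverted.

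\textbf{Subsolution.} The max $\underline\phi=\max(\phi_1,\phi_2)$ of the two tilted planar fronts works \emph{as is}: both $\phi_1$ and $\phi_2$ are exact solutions of \eqref{cequation}, and the maximum of two solutions is always a weak subsolution---no combustion hypothesis, no bump correction at the tip, is needed. Your concern here is misplaced.

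\textbf{Supersolution.} This is where the real work lies, and ``gluing'' or ``min'' does not do it. The minimum $\min(\phi_1,\phi_2)$ \emph{is} a supersolution of the PDE, but it fails the upper conical limit: along the branch $y\sim x\cot\alpha$ with $x\to-\infty$ one has $Y'=-x\cos\alpha+y\sin\alpha\to+\infty$ but $Y=x\cos\alpha+y\sin\alpha$ stays bounded, so $\phi_1$ does not approach $1$ there and the min stays bounded away from $1$ in $C^+_{\alpha,l}$. A compactly supported bump cannot repair this, since the defect is along an unbounded edge. The paper's device is different in kind: one takes $\bar\phi=H(\varphi+\psi)$ where $H$ is the $C^2$ extension of a solution $h$ to the scalar ODE $\beta h''+f(h)=0$ on $(\theta/2,2)$ with $h(\theta/2)=\theta$, $h'(\theta/2)=2$, and $\beta>0$ chosen small relative to lower bounds on $\partial_2\varphi,\partial_2\psi$ in the transition strips. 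The point is that $H''\le0$ absorbs the cross terms from $\Delta H(\varphi+\psi)$, while the ODE $\beta h''+f(h)=0$ kills the residual $f(H(\varphi+\psi))$ in the regions where the derivative lower bounds hold; one then checks $h(1)>1$ so that $f(H)=0$ once $\varphi+\psi\ge1$. This is the missing idea in your proposal, and it is what replaces the concavity of $f$ that made the KPP case easier.

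\textbf{Uniqueness of the speed.} Your plan---extract the asymptotic normal speed along each flat face of the cone and match it to $c_{A,q\sin\alpha,f}$---would require proving that a conical solution converges (after translation along an edge) to a planar pulsating front, which is an extra nontrivial step. The paper avoids this entirely: assuming $c_1\le c_2$ and using the monotonicity $\partial_y\Phi^1>0$ (Theorem~\ref{monotonicity}), one has that $\Phi^1$ is a \emph{supersolution} of the equation with speed $c_2$; the sliding of $\Phi^{1,\tau}$ against $\Phi^2$ then runs via the conical comparison Lemmas~\ref{comp.principle}--\ref{comp.principle1} exactly as in the monotonicity proof, and the equality $c_1=c_2$ falls out only at the very end, from $(c_1-c_2)\partial_y\Phi^2_\infty\equiv0$. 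Your sliding for the profile (once $c_1=c_2$) is essentially the same as the paper's.
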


\begin{theorem}[Monotonicity]\label{monotonicity}
The conical front $\phi=\phi(x,y)$ which solves \eqref{cequation} and satisfies the limiting conditions \eqref{concon} is increasing in the $y$ variable.
\end{theorem}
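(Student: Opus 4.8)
I would prove Theorem~\ref{monotonicity} by the sliding method in the vertical direction. The key structural observation is that equation \eqref{cequation} is invariant under translations in $y$ — the drift $q$ depends only on $x$ and $f$ carries no explicit space dependence — so that for every $\tau\in\R$ the shifted function $\phi_\tau(x,y):=\phi(x,y+\tau)$ is again a solution of \eqref{cequation}, and it satisfies the conical conditions \eqref{concon} with each cone apex $l$ replaced by $l+\tau$. The plan is to show that $\phi_\tau\ge\phi$ on $\R^2$ for every $\tau\ge0$; once this is known, strict monotonicity is immediate: for a fixed $\tau>0$ the function $z_\tau:=\phi_\tau-\phi\ge0$ solves $\Delta z_\tau+(q(x)-c)\partial_y z_\tau+c_\tau(x,y)\,z_\tau=0$, where $c_\tau:=(f(\phi_\tau)-f(\phi))/(\phi_\tau-\phi)$ on the set where $\phi_\tau\neq\phi$ and $c_\tau:=0$ elsewhere, so that $\|c_\tau\|_{L^\infty}\le\mathrm{Lip}(f)$; since $z_\tau\not\equiv0$ (otherwise $\phi$ would be $\tau$-periodic in $y$, which is incompatible with \eqref{concon}), the strong maximum principle yields $z_\tau>0$ on $\R^2$, i.e. $\phi(x,y+\tau)>\phi(x,y)$ for all $(x,y)$ and all $\tau>0$.

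Before the sliding I would record the preliminary fact $0<\phi<1$ on $\R^2$: the inequalities $0\le\phi\le1$ follow from \eqref{concon} together with comparison against the constant sub- and supersolutions $0$ and $1$ (or directly from the construction used in the proof of Theorem~\ref{existence}), and strictness follows from the strong maximum principle, writing $f(\phi)=c_0(x,y)\phi$ and $f(\phi)-f(1)=c_1(x,y)(\phi-1)$ with $c_0,c_1\in L^\infty$ because $f$ is Lipschitz and $f(0)=f(1)=0$. I would also import from the proof of Theorem~\ref{existence} the description of $\phi$ near infinity: $\phi$ is trapped between $y$-nondecreasing sub- and supersolutions that share its conical limits and are built from the slanted planar pulsating fronts $V^\pm$ attached to the two arms of the cone (with $V^-(x,y)=\varphi_A(x,x\cos\alpha+y\sin\alpha)$ and $V^+(x,y)=\varphi_B(x,-x\cos\alpha+y\sin\alpha)$, where $\varphi_A,\varphi_B$ solve the auxiliary problems \eqref{aux1} for the matrices $A$ and $B$ and are nondecreasing in their second argument); moreover, along each arm of the cone $\phi$ is uniformly close to the corresponding monotone front $V^\mp$, far up the upper cone $\phi$ is uniformly close to $1$ (where $f'<0$), and deep in the lower cone $\phi\le\theta$, so there $\phi$ solves the linear equation $\Delta\phi+(q(x)-c)\partial_y\phi=0$ and has a monotone exponentially decaying profile at the back.

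The core step is the sliding itself. I would set $\tau^*:=\inf\{\tau\ge0:\ \phi_{\tau'}\ge\phi\ \text{on}\ \R^2\ \text{for every}\ \tau'\ge\tau\}$ and argue that $\tau^*=0$. Finiteness of $\tau^*$ uses the four regions distinguished above: for $\tau$ large, $\phi_\tau\ge\phi$ holds far up the upper cone (both close to $1$, where the linearized zeroth-order term is negative since $f'<0$, so comparison applies), deep in the lower cone (both small, where the zeroth-order term vanishes and the monotone exponential back-profile does the job), along both arms (both close to the monotone $V^\mp$), and on the remaining bounded core (where $\phi$ is bounded away from $1$ but, after a large upward shift, $\phi_\tau$ is within $\eps$ of $1$). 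If $\tau^*>0$, then $z:=\phi_{\tau^*}-\phi\ge0$, $z\not\equiv0$ as above, and $z$ solves a uniformly elliptic equation with an $L^\infty$ zeroth-order coefficient, so by the strong maximum principle $z>0$ on $\R^2$; combining this with the uniform margins available near infinity (top, bottom, and the two arms, from the ingredients above) one obtains $\phi_{\tau^*-\eta}\ge\phi$ for small $\eta>0$, contradicting the minimality of $\tau^*$. Hence $\tau^*=0$ and $\phi_\tau\ge\phi$ for all $\tau\ge0$, which by the first paragraph gives that $\phi$ is strictly increasing in $y$.

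The step I expect to be the main obstacle is the behaviour at infinity along the two arms of the cone: there the transition zone is unbounded and $\phi$ is genuinely two-dimensional, so one cannot close the sliding by crude pointwise comparisons and must instead read $\phi_\tau\ge\phi$ off the fine asymptotics of $\phi$ towards the monotone slanted pulsating fronts $V^\mp$ established in the proof of Theorem~\ref{existence}. (If that construction produces $\phi$ as the limit of a monotone iteration — parabolic or elliptic — started from the $y$-nondecreasing subsolution $\max(V^-,V^+)$, then non-strict monotonicity is in fact immediate, since each iterate inherits $y$-monotonicity by comparison with its own $y$-translate, and only the strong maximum principle step of the first paragraph remains; the sliding argument above is the route that does not rely on the particular construction.)
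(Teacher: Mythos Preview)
Your overall architecture --- slide $\phi_\tau$ against $\phi$, define $\tau^*$, and upgrade $\phi_\tau\ge\phi$ to strict monotonicity via the strong maximum principle --- is exactly the paper's. The difference, and the gap, lies in how you handle the unbounded regions.

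The paper does \emph{not} decompose $\R^2$ into ``top, bottom, two arms, bounded core'' and does \emph{not} use any asymptotics of $\phi$ towards the slanted fronts $V^\pm$ along the arms. No such asymptotics are established in the existence section: the Perron construction only yields $\underline\phi\le\phi\le\overline\phi$, which is far too weak to produce the uniform positive margin $\phi_{\tau^*}-\phi\ge\varepsilon$ along the arms that your descent step needs. Instead the paper invokes two \emph{conical comparison principles} (Lemmas~\ref{comp.principle} and~\ref{comp.principle1}, quoted from \cite{ehh}): one chooses $B$ so that $\phi\le\theta$ on the lower cone $C^-_{\alpha,-B}$ (where $f$ is nonincreasing since it vanishes) and $\phi^\tau\ge 1-\rho$ on the upper cone $C^+_{\alpha,-B}$ (where $f$ is nonincreasing since $f'(1)<0$), and each lemma disposes of an entire unbounded cone in one stroke. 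This gives $\phi^\tau\ge\phi$ on all of $\R^2$ for $\tau\ge 2B$ without ever isolating the arms.

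For the descent from $\tau^*$ the paper again avoids any ``uniform margins near infinity''. It restricts attention to the slice $S=C^+_{\alpha,-B}\setminus C^+_{\alpha,B}$ (still unbounded, in the $x$-direction) and splits into two cases according to whether $\sup_{\overline S}(\phi-\phi^{\tau^*})$ is strictly negative or zero. In the first case one decreases $\tau$ slightly and reapplies the two conical comparison lemmas above and below $S$; in the second case one takes a maximizing sequence $(x_n,y_n)\in\overline S$, translates, extracts a limit using the $L$-periodicity of $q$, and obtains an interior touching point for the limiting difference, contradicting the strong maximum principle together with \eqref{concon}.

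So your self-identified ``main obstacle'' is a genuine gap relative to the paper's proof: the closeness of $\phi$ to $V^\mp$ along the arms is neither proved nor needed. Your parenthetical alternative --- inheriting $y$-monotonicity from a monotone iteration started at $\max(V^-,V^+)$ --- is sound for the particular solution produced by the construction, but Theorem~\ref{monotonicity} is proved \emph{before} uniqueness and is then invoked in the uniqueness argument (``From Theorem~\ref{monotonicity}, we know that $\Phi^1$ and $\Phi^2$ satisfy $\partial_2\Phi_i>0$\ldots''), so it must apply to an arbitrary $(c,\phi)$ satisfying \eqref{cequation}--\eqref{concon}; taking that shortcut would make the uniqueness proof circular.
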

\begin{remark}[Differences between  `combustion' and  `KPP' nonlinearities] {\rm We comment on the influence of the nonlinearity $f$ on the problem by recalling the results of \cite{ehh}, where the reaction $f$ was of KPP type. First, we note that the symmetry assumption on $q$ was not needed in the KPP case studied in \cite{ehh}. Also, in \cite{ehh} the cones which appear in the conditions at $\pm\infty$ can have different angles
which were denoted by $\alpha$ and $\beta$.  A main reason leading to these differences is that in the KPP case there is a range of speeds of the form $[c^*_{A},\infty)$ (resp. $[c^*_{B},\infty)$) rather than a unique speed, 
 where $c^*$ denotes the minimal KPP speed of propagation.  This fact allowed the following construction in \cite{ehh}: 
 for a given $c\ge c^*$, there exist $(c_\alpha, \varphi_{\alpha})$ and $(c_\beta, \varphi_{\beta})$  and such that
\begin{equation}\label{subc}
\displaystyle{c=\frac{c_\alpha}{\sin\alpha}=\frac{c_\beta}{\sin\beta}\ge c^*. }
\end{equation}
In this present work,   the  speeds $c_{A}$ and $c_{B}$ are unique  as $f$ is of type \eqref{cf}. Moreover, the KPP type nonlinearity considered in \cite{ehh} is concave on the interval $[0,1]$ while this is not the case for a `combustion' type nonlinearity \eqref{cf} (due to the ignition temperature $\theta$). The concavity of the KPP made the construction of a supersolution that obeys the conical conditions at $\pm\infty$ easier than what we will have in the present work.  The uniqueness of the speed in the `combustion' case, and the non-concavity of the nonlinearity $f$ over $[0,1]$,  will be the main differences that  make the construction of desired solutions more involved than in \cite{ehh}. }
\end{remark}

\section{Proofs}

\subsection{Proof of existence in Theorem \ref{existence}}
We will connect the conical-fronts problem to planar-pulsating  fronts through a change of variables. We denote by 
\begin{equation}\label{components}
\phi_{1}(x,y):=\varphi(x,x\cos\alpha+y\sin\alpha)~\hbox{ and }~\phi_{2}(x,y):=\psi(x,-x\cos\alpha+y\sin\alpha),
\end{equation}
where $\varphi$ and $\psi$ are the unique solutions to \eqref{aux1} and \eqref{aux2} respectively. We will construct a solution to \eqref{cequation} that satisfies the limiting conditions \eqref{concon} via Perron-type  methods  introduced in \cite{amann1} and  Noussair \cite{Noussair} for instance. We start by building a subsolution to the conical problem.
\begin{lemma}[Subsolution]\label{subsolution}
Let \begin{equation}\label{subc}
\displaystyle{c=\frac{c_{A,q\sin\alpha,f}}{\sin\alpha}=\frac{c_{B,q\sin\alpha,f}}{\sin\alpha},}
\end{equation} where $c_{A,q\sin\alpha,f}$ is the unique speed of the pulsating traveling front solving \eqref{aux1}
and let \begin{equation}
\underline{\phi}(x,y):=\max\{\phi_{1}(x,y),\phi_{2}(x,y)\}, ~~(x,y)\in \R^{2},
\end{equation}
where \begin{equation}
\phi_{1}(x,y):=\varphi(x,x\cos\alpha+y\sin\alpha)~\hbox{ and }~\phi_{2}(x,y):=\psi(x,-x\cos\alpha+y\sin\alpha),
\end{equation}
and $\varphi$ (resp. $\psi$) is the solution to \eqref{aux1} (resp. \eqref{aux2}). 
Then $\underline{\phi}$ is a subsolution of \eqref{cequation} with the conditions \eqref{concon}. 
\end{lemma}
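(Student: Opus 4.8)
The strategy is to verify directly that each of $\phi_1$ and $\phi_2$ is a subsolution of the conical equation \eqref{cequation}, and then invoke the standard fact that the maximum of two subsolutions is itself a (viscosity, or $H^1_{loc}$-weak) subsolution. The conical limiting conditions \eqref{concon} will then follow by tracking where, in the $(x,y)$-plane, each component $\phi_i$ is close to $0$ or close to $1$.

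First I would carry out the change of variables. Set $X=x$, $Y=x\cos\alpha+y\sin\alpha$, so that $\phi_1(x,y)=\varphi(X,Y)$ with $\varphi$ solving \eqref{aux1}. A direct computation of the chain rule gives $\partial_x\phi_1=\partial_X\varphi+\cos\alpha\,\partial_Y\varphi$, $\partial_y\phi_1=\sin\alpha\,\partial_Y\varphi$, and for the second derivatives $\partial_{xx}\phi_1=\partial_{XX}\varphi+2\cos\alpha\,\partial_{XY}\varphi+\cos^2\alpha\,\partial_{YY}\varphi$, $\partial_{yy}\phi_1=\sin^2\alpha\,\partial_{YY}\varphi$; hence $\Delta_{x,y}\phi_1=\partial_{XX}\varphi+2\cos\alpha\,\partial_{XY}\varphi+\partial_{YY}\varphi=\mathrm{div}(A\nabla\varphi)$ with $A$ as in \eqref{matrixAandB}. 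Likewise $(q(x)-c)\partial_y\phi_1=(q(X)-c)\sin\alpha\,\partial_Y\varphi=(q(X)\sin\alpha-c\sin\alpha)\partial_Y\varphi$. Using the definition $c\sin\alpha=c_{A,q\sin\alpha,f}$ from \eqref{subc}, the left-hand side of \eqref{cequation} evaluated at $\phi_1$ equals exactly $\mathrm{div}(A\nabla\varphi)+(q(X)\sin\alpha-c_{A,q\sin\alpha,f})\partial_Y\varphi+f(\varphi)$, which vanishes by \eqref{aux1}. So $\phi_1$ is in fact an exact solution of \eqref{cequation}, a fortiori a subsolution; the analogous computation with $X=x$, $Y=-x\cos\alpha+y\sin\alpha$ shows $\phi_2$ is an exact solution as well, using the matrix $B$ and the identity $c\sin\alpha=c_{B,q\sin\alpha,f}$ from \eqref{subc} (valid here because the Proposition on the symmetry assumption gives $c_{A,q\sin\alpha,f}=c_{B,q\sin\alpha,f}$ — though in fact $\phi_2$ by itself solves the equation regardless). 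Therefore $\underline\phi=\max\{\phi_1,\phi_2\}$ is a subsolution of \eqref{cequation} in the generalized sense, being the max of two classical solutions.

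Next I would check the conical conditions. Since $\varphi(X,Y)\to 0$ as $Y\to-\infty$ uniformly in $X$, and $Y=x\cos\alpha+y\sin\alpha$, we get $\phi_1(x,y)\to 0$ precisely as $x\cos\alpha+y\sin\alpha\to-\infty$; similarly $\phi_2(x,y)\to 0$ as $-x\cos\alpha+y\sin\alpha\to-\infty$. On the lower cone $C^-_{\alpha,l}$ one has $y\le x\cot\alpha+l$ for $x\le 0$, i.e. $x\cos\alpha+y\sin\alpha\le\cos\alpha\cdot 0 + l\sin\alpha$ — more carefully, $x\cos\alpha+y\sin\alpha \le x\cos\alpha + (x\cot\alpha+l)\sin\alpha = x(\cos\alpha+\cos\alpha)+l\sin\alpha$... let me just say: a short computation shows that on $C^-_{\alpha,l}$ both linear forms $x\cos\alpha+y\sin\alpha$ and $-x\cos\alpha+y\sin\alpha$ are bounded above by $l\sin\alpha$ (checking the two regions $x\le 0$ and $x\ge 0$ separately, the relevant form being controlled on each). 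Hence $\sup_{C^-_{\alpha,l}}\phi_i\to 0$ as $l\to-\infty$ for $i=1,2$, so $\sup_{C^-_{\alpha,l}}\underline\phi\to 0$, which is the first condition in \eqref{concon}. For the upper cone: on $C^+_{\alpha,l}$ one has, for $x\le 0$, $y\ge x\cot\alpha+l$, whence $x\cos\alpha+y\sin\alpha\ge l\sin\alpha$, so $\phi_1\to 1$ uniformly on $C^+_{\alpha,l}\cap\{x\le 0\}$ as $l\to+\infty$; symmetrically $\phi_2\to 1$ uniformly on $C^+_{\alpha,l}\cap\{x\ge 0\}$. Taking the max, $\inf_{C^+_{\alpha,l}}\underline\phi\ge\min\big(\inf_{x\le 0}\phi_1,\inf_{x\ge 0}\phi_2\big)\to 1$, giving the second condition in \eqref{concon}. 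One also notes $0\le\underline\phi\le 1$ since $0\le\varphi,\psi\le 1$.

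The main obstacle I anticipate is purely bookkeeping rather than conceptual: verifying the geometric inequalities relating the half-plane regions $\{x\cos\alpha+y\sin\alpha\lessgtr l\sin\alpha\}$, $\{-x\cos\alpha+y\sin\alpha\lessgtr l\sin\alpha\}$ to the cones $C^\pm_{\alpha,l}$ of Definition \ref{cones}, including the edge cases at $x=0$ and the behavior of the uniform limits, and being careful that $\cot\alpha$ changes sign at $\alpha=\pi/2$ so the argument must not implicitly assume $\alpha<\pi/2$. A secondary technical point is to state precisely in what sense $\max\{\phi_1,\phi_2\}$ is a subsolution — I would phrase it as: for any smooth test function touching $\underline\phi$ from above at a point, that point is a point where either $\phi_1$ or $\phi_2$ realizes the max, and there the classical equation for that component gives the required differential inequality; equivalently one may work with the $W^{1,2}_{loc}$ weak formulation where $\max$ of weak subsolutions is a weak subsolution. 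This is exactly the framework in which the Perron / sub–supersolution method of \cite{amann1} and \cite{Noussair} operates, so no further regularity argument is needed at this stage.
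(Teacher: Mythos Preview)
Your approach matches the paper's exactly: show by direct change of variables that each $\phi_i$ is an exact solution of \eqref{cequation}, take the maximum, and verify the conical limits. The paper's proof is in fact terser than yours---it simply records the identity $\Delta\phi_i+(q(x)-c)\partial_y\phi_i+f(\phi_i)=0$ and asserts \eqref{concon} without working out the cone geometry or commenting on the sense in which the maximum is a subsolution.

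There is one bookkeeping slip in your upper-cone check. On $C^+_{\alpha,l}\cap\{x\le 0\}$, the inequality $y\ge x\cot\alpha+l$ gives
\[
-x\cos\alpha+y\sin\alpha\ \ge\ -x\cos\alpha+x\cos\alpha+l\sin\alpha\ =\ l\sin\alpha,
\]
\emph{not} $x\cos\alpha+y\sin\alpha\ge l\sin\alpha$ (the latter would need $2x\cos\alpha\ge 0$, which fails for $x<0$ and $\alpha<\pi/2$). So it is $\phi_2$ that is forced close to $1$ on the left half of the upper cone and $\phi_1$ on the right half---the reverse of what you wrote. The conclusion for $\underline\phi=\max\{\phi_1,\phi_2\}$ is of course unaffected. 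Your instinct to flag the sign of $\cot\alpha$ is also justified: for $\alpha>\pi/2$ the claim that both linear forms are bounded above by $l\sin\alpha$ on $C^-_{\alpha,l}$ actually fails (along the boundary at $x\to-\infty$ one has $Y=2x\cos\alpha+l\sin\alpha\to+\infty$), so $\underline\phi$ need not satisfy the \emph{lower} conical limit in that range. This does not damage the existence argument, since the Perron scheme only needs the subsolution to enforce the upper conical limit while $\overline\phi$ handles the lower one, but it is worth noting rather than glossing over.
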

\begin{proof} Note that $\phi_{1}$, defined in \eqref{components} by $\phi_{1}(x,y):=\varphi(x,x\cos\alpha+y\sin\alpha),$ satisfies  
\begin{align*}
\Delta \phi_1(x,y)+(q(x)-c)\partial_y\phi_1(x,y)+f(\phi_1)\\
\hbox{div}(A\nabla\varphi)+(q(x)-c)\sin\alpha\partial_Y\varphi+f(\varphi)=0
\end{align*}
for all  $(x,y)\in \mathbb R^2$, where the quantities involving $\varphi$ are taken values at the point $(x,x\cos\alpha+y\sin\alpha).$ Also, $\phi_{2}$, defined by $\phi_{2}(x,y):=\psi(x,-x\cos\alpha+y\sin\alpha),$ satisfies  
\begin{align*}
\Delta \psi(x,y)+(q(x)-c)\partial_y\phi_2(x,y)+f(\phi_2)\\=\hbox{div}(B\nabla\psi)+(q(x)-c)\sin\alpha\partial_Y\psi+f(\psi)=0
\end{align*}
for all  $(x,y)\in \mathbb R^2.$ 
Moreover, since $\sin \alpha >0$ when $\alpha\in (0,\pi),$ it follows that  $\lim_{y\rightarrow-\infty}\phi_{1}(x,y)=\lim_{y\rightarrow-\infty}\phi_{2}(x,y)=0$ and $\lim_{y\rightarrow+\infty}\phi_{1}(x,y)=\lim_{y\rightarrow+\infty}\phi_{2}(x,y)=1.$  Then, for \[\underline{\phi}(x,y):=\max\{\phi_{1}(x,y),\phi_{2}(x,y)\},\] we have  \begin{equation}\label{sub.concon}
\displaystyle{\lim_{l\rightarrow-\infty}}\Big(\displaystyle{\sup_{(x,y)\in C^{-}_{\alpha,l}}}\underline\phi(x,y)\Big)= 0\quad \text{and} \quad 
\displaystyle{\lim_{l\rightarrow\infty}\Big(\inf_{(x,y)\in C^{+}_{\alpha,l}}\underline\phi(x,y)\Big)= 1.}\end{equation}
Therefore, the function 
$\underline{\phi}(x,y):=\max\{\phi_{1}(x,y),\phi_{2}(x,y)\}$
is a subsolution of \eqref{cequation} with the limiting conditions \eqref{concon}. \end{proof}
\subsubsection{Supersolution}  

 We choose the planar fronts represented by the functions $\varphi$ and $\psi$, introduced in \eqref{components} above, such that 

\begin{equation}\label{normalized.panar}\varphi(0,0)=\psi(0,0)=\theta.\end{equation}
The choice in \eqref{normalized.panar} is possible because $\varphi$ and $\psi$ are increasing in the second variable, and satisfy the limiting conditions \eqref{aux1} and \eqref{aux2}.
 
\noindent In order to arrive the desired inequality  \[\Delta\overline{\phi}(x,y)+\left(q(x)-c\right)\partial_y\overline \phi(x,y)+f(\overline{\phi}(x,y))\leq 0 ~\hbox{ for all }~(x,y)\in \mathbb{R}^{2},\] we will divide the plane into several regions according to $(x,y)\mapsto\varphi(x,x\cos\alpha+y\sin\alpha)$, $(x,y)\mapsto\psi(x,-x\cos\alpha+y\sin\alpha)$ and their sum $\phi_{1}+\phi_{2}$. This division of the plane will also  clarify our choice of the functions $H$ and $h$ that appear in the nominated supersolution in formula \eqref{def.phibar} below. We set $Y=x\cos\alpha+y\sin\alpha$  and $Y'=-x\cos\alpha+y\sin\alpha$ and denote by

\begin{equation}\label{set.E1}
E_{1}:=\{(x,y)\in \R^{2}| ~ 0<\varphi(x,Y)+\psi(x,Y')\leq \theta\} 
\end{equation}

\[E:=\{(x,y)\in \R^{2}|~ \varphi(x,Y)+\psi(x,Y')\geq \theta\}. \]

Observe that if $(x,y)\in E$ (that is $(\varphi(x,Y)+\psi(x,Y'))\geq \theta$), then as $\phi>0$ and $\psi>0$,  at least one of $\varphi$ and $\psi$ must be greater or equal $\theta/2.$ We then  divide the set $E$ into the two subregions
\begin{equation}\label{sets.E2.E3}
\begin{array}{c}
E_{2}:=\left\{(x,y)\in E,  \quad  \theta \geq \varphi(x,Y)\;\hbox{ or }\;\theta\geq \psi(x,Y')\right\}
%\displaystyle{E_{3}:=\left\{(x,y)\in E ~|~ \hbox{either }  1 \geq \varphi(x,Y)\geq {\theta}\;\hbox{ or }\;1\geq \psi(x,Y')%\geq {\theta}\right\}}.
\end{array}
\end{equation}

\paragraph{Relations between the sets $E_{1,2}$ and the pulsating traveling fronts $\phi_{1,2}$.} We use the variables $Y$ and $Y'$ to explore  the relation  of the  functions $\varphi$ and $\psi$ to the sets we constructed above.  

We know from Berestycki and Hamel \cite{bh02} that the following limits hold uniformly in $x$:
\begin{equation}\label{limits.planar}
\begin{array}{c}\lim_{Y\rightarrow -\infty}\varphi(x,Y)=0, \ \lim_{Y'\rightarrow -\infty}\psi(x,Y')=0,  \vspace{12pt} \\ \lim_{Y\rightarrow +\infty}\varphi(x,Y)=1, ~\lim_{Y'\rightarrow +\infty}\psi(x,Y')=1
\vspace{12pt} \\ 
\lim_{Y\rightarrow \pm \infty}\partial_{2}\varphi(x,Y)=0\ \hbox{ and }\ \lim_{Y'\rightarrow \pm\infty}\partial_{2}\psi(x,Y')=0.
\end{array} \end{equation}

\begin{figure}[h!]
\begin{center}\includegraphics[scale=0.885]{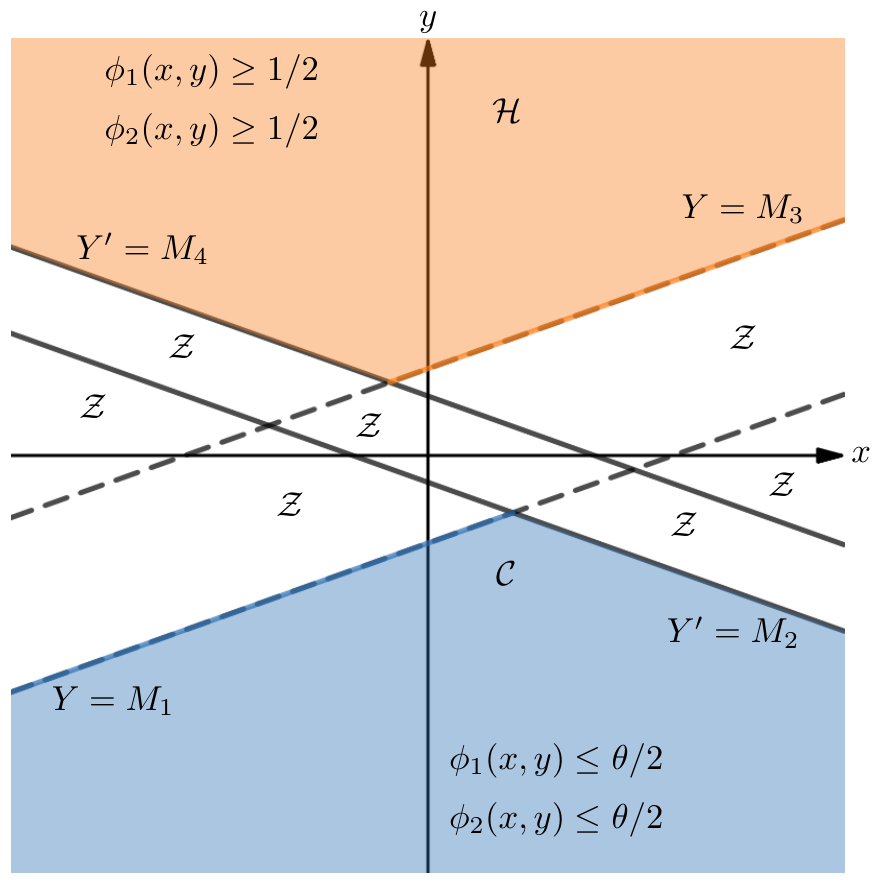}\end{center}
\caption{with $Y=x\cos\alpha+y\sin\alpha$ and $Y'=-x\cos \alpha+y\sin\alpha$, a sketch of Regions $\mathcal{H}$ and $\mathcal{Z}$. The main feature of set $\mathcal{Z}$ is given in \eqref{derivativebound1}}
\end{figure}
\noindent Moreover, $\varphi$ and $\psi$ are increasing in the second variable: $\partial_{2}\varphi(x,Y)>0$ and $\partial_{2}\psi(x,Y')>0$ everywhere in $\R^{2}.$  This allows us to find four constants $M_{1}<0,$ $M_{2}<0$, $M_{3}>0$, $M_{4}>0$  and a $\mu >0$ such that, for all $x\in \R,$
\begin{equation}\label{M1234}
\begin{array}{l}
\varphi(x,Y)\leq \theta/2 \hbox{ when } Y\leq M_{1}, \quad 
\psi(x,Y') \leq \theta/2 \hbox{ when } Y'\leq M_{2},\vspace{8 pt}\\
\varphi(x,Y)\geq 1/2 \hbox{ when } Y\geq M_{3}, \quad
\psi (x,Y')\geq 1/2 \hbox{ when } Y'\geq M_{4},
\end{array}
\end{equation}
\begin{equation}\label{derivativebound1}
\begin{array}{l}
\partial_2\varphi(x, Y)\geq \mu>0 ~  \hbox{ for }~
M_{1}\leq Y \leq M_{3}, \hbox{ and }\vspace{10 pt}\\
\partial_2\psi(x, Y')\geq \mu>0~\hbox{ for  } ~
M_{2}\leq Y' \leq M_{4}.
\end{array}
\end{equation}
Lastly, we introduce the  thresholds ${M_{0}}<M_{1}$ and $M_{0}'<M_{2}$ as follows:
\begin{equation}\label{M0}
\begin{array}{l}
M_{0}:=\sup\left\{y\in\R,~ \varphi(x,x\cos\alpha+y\sin\alpha)\leq \frac{\theta}{4}\hbox{ for all }x\in\R\right\} \hbox{ and }\vspace{7 pt}\\
M_{0}':=\sup\left\{y\in\R,~ \psi(x,-x\cos\alpha+y\sin\alpha)\leq \frac{\theta}{4}\hbox{ for all }x\in\R\right\}.
\end{array}
\end{equation}
Note that $M_{0}$ and $M_{0}'$ are finite due to the monotonicity of $\phi_{1,2}$ in the second argument and their $L$-periodicity in $x.$  This also allows us to find $\mu_{0}>0$ such that 
\begin{equation}\label{derivativebound2}
\begin{array}{l}
\partial_2\varphi(x, Y)\geq \mu_{0}>0 ~  \hbox{ for }
M_{0}\leq Y \leq M_{1}\hbox{ and }\vspace{10 pt}\\
\partial_2\psi(x, Y')\geq \mu_{0}>0~\hbox{ for  } 
M_{0}'\leq Y'\leq M_{2}.
\end{array}
\end{equation}

\noindent Now we define the sets $\mathcal{C},$ $\mathcal{H}$ and $\mathcal{Z}$ by

\begin{equation}\label{setZandCandH}
\begin{array}{c}
\mathcal{H}:=\left\{(x,y)\in\R^{2}~\text{ such that }~Y\geq M_{3} \hbox{ and }Y'\geq M_{4}\right\},\vspace{10 pt}\\
\displaystyle{\mathcal{C}:=\left\{(x,y)\in \R^{2}~\text{ such that }~Y\leq M_{1} \hbox{ and }Y'\leq M_{2}\right\}} \hbox{ and } \vspace{10 pt}\\
\mathcal{Z}:=\R^{2}\setminus (\mathcal{H}\cup\mathcal{C}).
\end{array}
\end{equation}

\begin{remark}Note that  $\mathcal{C}\subseteq E_{1}$ and that the inclusion may be strict. This is because the level sets of the pulsating traveling fronts $\phi_{1}$ and $\phi_{2}$ are not necessarily given by ``perfect'' cones with boundaries parallel to straight lines $Y=\hbox{constant}$ or $Y'=\hbox{constant}.$ The shapes of level sets of nonplanar/curved front solutions to equation \eqref{ueq}, with $q=0$, were studied in Hamel and Monneau \cite{HMonneau}, namely Theorem 1.2. In this present work, since the advection term $q$ is nonzero,  the set $\mathcal{Z}$ will be given a special attention in the construction of a super-solution.   
\end{remark}

In order to construct a supersolution $\overline{\phi}$, we will use an auxiliary function $h$ which will be composed with the sum $\phi_{1}+\phi_{2}$ of the two pulsating traveling fronts introduced above. It turns out that the function $h$ should satisfy a second order differential equation in order to produce a supersolution when composed with $\phi_{1}+\phi_{2}$ (this approach is inspired by the work of Tao, Zhu and Zlato\v{s} \cite{zlatosTaoZhu} dedicated to a different problem.) We will study this ODE in the next lemma and prove few properties of its solutions. These properties will play a role in construction a supersolution to \eqref{cequation} with the limiting conditions \eqref{concon}.   

  \begin{lemma}\label{the.function.h} Let $\beta>0$ be a positive number and let $h_{\beta}$ (write $h$ for simplicity) denote the unique solution to the initial value problem
 \begin{equation}\label{IVP}
 \left\{\begin{array}{ll}
 \beta h''(z)+f(h(z))=0 & \hbox{ for }~\frac{\theta}2<z<2,\\
 h(\theta/2)=\theta,&\\
h'(\theta/2)=2.&
 \end{array}\right.
 \end{equation}
 Then the following assertions hold 
 \begin{enumerate}
 \item[(a)] For any $\beta>0,$ the solution $h_{\beta}$ is strictly increasing on the interval $[{\theta}/{2},2].$
  \item[(b)]  For any $\beta>0,$ $h_{\beta}(1)> 1.$
   \item[(c)]  For any $\beta>0,$ $h_{\beta}(2)> 1.$
 \end{enumerate}
 \end{lemma}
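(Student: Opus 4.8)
The plan is to analyze the initial value problem \eqref{IVP} by the phase-plane (energy) method, exploiting that $f\ge 0$ on all of $\R$ (recall $f\equiv 0$ outside $[0,1]$ and $f>0$ on $(\theta,1)$). Since $f$ is Lipschitz and bounded, the first-order system for the pair $(h,h')$ is well posed, which already yields the unique solution $h=h_\beta$ on $[\theta/2,2]$ claimed in the statement. Because $\beta h''=-f(h)\le 0$, the solution is concave and $h'$ is non-increasing. Multiplying the equation by $h'$ and integrating produces the conserved energy
\[
E(z):=\frac{\beta}{2}\,(h'(z))^2+F(h(z)),\qquad F(u):=\int_0^u f(s)\,ds,
\]
and evaluating at $z=\theta/2$, where $h=\theta$, $h'=2$ and $F(\theta)=0$ (as $f\equiv 0$ on $[0,\theta]$), gives $E\equiv 2\beta$. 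Hence the key first-order relation
\[
(h'(z))^2=4-\frac{2}{\beta}\,F(h(z)),\qquad z\in[\theta/2,2].
\]

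For assertion (a) I would establish $h'>0$ throughout $[\theta/2,2]$. Here $F$ is nondecreasing (since $f\ge 0$) and bounded above by $F(1)=\int_0^1 f$. Starting from $h'(\theta/2)=2>0$, the plan is to propagate positivity of $h'$: while $h$ lies in the ignition range $(\theta,1)$ the reaction is active and concavity drives $h'$ downward, but the conserved relation controls $(h')^2$ from below, so one shows $h'$ does not vanish before $h$ attains the level $1$; and as soon as $h$ reaches $1$ one has $f(h)=0$, whence $h''=0$ and $h'$ is constant and positive thereafter. Piecing the two regimes together gives $h'>0$ on the whole interval, i.e.\ strict monotonicity.

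Assertions (b) and (c) follow from a quantitative version of the same estimate. Monotonicity together with a positive lower bound $m$ on $h'$ yields a linear minorant $h(z)\ge \theta+m\,(z-\theta/2)$, which forces $h$ to cross the level $1$ at some $z_\ast\in[\theta/2,2]$; since $f\equiv 0$ on $[1,\infty)$, past $z_\ast$ the solution grows linearly with fixed positive slope and never returns below $1$. It then remains to verify that this crossing occurs before $z=1$, which gives $h_\beta(1)>1$, and consequently $h_\beta(2)>h_\beta(1)>1$. There is geometric room for this: at the initial slope $2$ the level $1$ would be reached after a $z$-increment of $(1-\theta)/2$, which is strictly smaller than the available length $1-\theta/2$, the inequality $1-\theta/2>(1-\theta)/2$ being what leaves margin for the conclusion.

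The main obstacle is exactly the positivity of $h'$ in (a): concavity forces $h'$ to decay while $h$ traverses the active zone $(\theta,1)$ where $f>0$, and one must exclude that $h'$ reaches zero there, which by concavity would create an interior maximum of $h$ strictly below the level $1$ and destroy all three conclusions at once. The conserved relation $(h')^2=4-\tfrac{2}{\beta}F(h)$, combined with the uniform bound $F\le\int_0^1 f$, is the device I would use to control this decay and, simultaneously, to extract the slope lower bound $m$ needed for the threshold crossings in (b) and (c); pushing this estimate so that it closes for the full range of $\beta$ is the delicate point of the argument.
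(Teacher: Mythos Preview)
Your energy/phase-plane approach is genuinely different from the paper's, which argues all three parts by the sliding method: for (a) the paper compares $h$ with its own translates $h^{\lambda}(z)=h(z+2-\theta/2-\lambda)$ and slides $\lambda$ down from a large initial value, invoking the strong maximum principle for the linear equation satisfied by the difference $U=h^{\lambda}-h$ at each step; for (b) it slides $h$ against the affine function $q(z)=z+\theta/2$ in the same fashion; (c) then follows from (a) and (b). No first integral enters the paper's argument.

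There is, however, a genuine gap in your proposal precisely at the point you yourself flag, and it is not a mere technicality. The conserved relation $(h')^{2}=4-\tfrac{2}{\beta}F(h)$ does \emph{not} keep $h'$ positive when $\beta$ is small. Since $F(\theta)=0$, $F(1)=\int_{0}^{1}f>0$ and $F$ is strictly increasing on $[\theta,1]$, whenever $2\beta<F(1)$ there is a unique $u^{*}\in(\theta,1)$ with $F(u^{*})=2\beta$, and the orbit actually reaches $(h,h')=(u^{*},0)$ after a finite $z$-increment: near $u^{*}$ one has $4-\tfrac{2}{\beta}F(h)\sim\tfrac{2f(u^{*})}{\beta}(u^{*}-h)$ with $f(u^{*})>0$, so the time-of-flight integral $\int^{u^{*}}\!dh\big/\sqrt{4-\tfrac{2}{\beta}F(h)}$ converges (and in fact tends to $0$ as $\beta\to 0$). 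At that point $h'=0$ while $h''=-f(u^{*})/\beta<0$, so $h$ attains a strict interior maximum strictly below $1$ and then decreases. Thus the slope lower bound you invoke, $m=\sqrt{4-\tfrac{2}{\beta}F(1)}$, is imaginary in this regime, the linear minorant $h(z)\ge\theta+m(z-\theta/2)$ is unavailable, and the threshold crossings needed for (b)--(c) never occur. In short, the very energy identity you derive shows that your scheme can only close when $\beta\ge\tfrac{1}{2}\int_{0}^{1}f$; for smaller $\beta$---which is exactly the range singled out in Proposition~\ref{gen.supersol}---no refinement of the energy bound will produce the uniform positivity of $h'$ that your argument requires, and a qualitatively different device (such as the sliding comparisons the paper uses) would be needed.
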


 \begin{proof}[{\bf Proof of part (a) of Lemma \ref{the.function.h}.}] Fix $\beta>0.$ We drop the subscript  $\beta$ for simplicity in writing.  We will use the well known sliding method (see \cite{BerNir} and \cite{bh02}, for example) in order to prove that $h$ is increasing on $[\theta/2,2]:$\\
  for $0<\lambda<2-{\theta}/{2},$ we set \[h^{\lambda}(z):=h(z+2-\frac{\theta}{2}-\lambda) \text{ for } z\in (\frac{\theta}{2},\frac{\theta}{2}+\lambda).\]
 It suffices to prove that 
 \begin{equation}\label{h.less.hlambda}h<h^{\lambda} \text{ over } (\frac{\theta}{2},\frac{\theta}{2}+\lambda),\end{equation}
  for all $0<\lambda<2-{\theta}/{2}.$ 
  
  We begin by recording few facts about the function $h$ which solves \eqref{IVP}. 
 First, since $f\equiv  0$ on $\R\setminus [0,1]$ and $f\geq0$ in $[0,1]$, the strong maximum principle applied to \eqref{IVP} yields that $h(z)>0$ for all $z\in ({\theta}/{2},2).$
 
 \noindent The nonlinearity $f$ is Lipschitz-continuous, so the solution $h$ of the elliptic differential equation \eqref{IVP} is of class $C^{2}$ on $({\theta}/{2},2).$ Knowing that $h'({\theta}/{2})=2>0,$ and that $h'$ is continuous on $[{\theta}/{2},2],$ it then follows that $h'>0$ in an open neighbourhood of $z={\theta}/{2}.$

We can now launch the sliding argument. Let us define 
\begin{equation}\label{sup.sliding}\lambda^{*}:=\sup\{\lambda\in[0,2-{\theta}/{2}]\text{ such that }h<h^{\sigma}\text{ on }({\theta}/{2},{\theta}/{2}+\sigma)\text{ for all }\sigma\leq \lambda\}.\end{equation}
The discussion above ($h$ is strictly increasing on the interval $[\theta/2,z_{1})$ shows that the supremum  in \eqref{sup.sliding} is taken over a nonempty set and that \eqref{h.less.hlambda} holds true for small enough $\lambda$). Our goal is then to show that $\lambda^{*}=2-{\theta}/{2}.$ Suppose on the contrary that $\lambda^{*}<2-{\theta}/{2}.$ By continuity, one has $h\leq h^{\lambda^{*}}$ in $[{\theta}/{2},{\theta}/{2}+\lambda^{*}].$ On the other hand, there exist a sequence $\{\lambda_{n}\}_{n}$ such that $\lambda_{n}>\lambda^{*}$ and 
$\lambda_{n}\rightarrow \lambda^*$ as well as a sequence of points $\{z_{n}\}_{n}$ in $\left({\theta}/{2},{\theta}/{2}+\lambda_{n}\right)$ such that $h^{\lambda_{n}}(z_{n})\geq h(z_{n})$ for all $n\in \mathbb N.$ Thus, up to a subsequence, $z_{n}\rightarrow \bar z$, as $n\rightarrow +\infty$, for some $\bar z\in \left[{\theta}/{2},{\theta}/{2}+\lambda^{*}\right].$ Passing to the limit as $n\rightarrow +\infty$, we  conclude that $h(\bar z)=h^{\lambda^{*}}(\bar z).$ We now define the function $U$ by 
\[U(z)=h^{\lambda^{*}}(z)-h(z) \text{ in }\left[ {\theta}/{2},{\theta}/{2}+\lambda^{*}\right].\]
We know that $U\geq 0$ in $\left[{\theta}/{2},{\theta}/{2}+\lambda^{*}\right].$ Moreover, it follows from \eqref{IVP} and from the assumption that  $f$ is Lipschitz that we can find a bounded function $b$ such that 
\[U''(z)+b(z)U(z)=0~\text{ for }~z\in ({\theta}/{2},{\theta}/{2}+\lambda^{*})\] together with 
 \[
  U\left(\frac{\theta}{2}\right)=h(2-\lambda^{*})-h(\theta/2),~
   U(\bar z)=0 ~\hbox{ and }~ U'(\theta/2)=h'(2-\lambda^{*})-2.\]
If the point $\bar z$ is an interior point (i.e. ${\theta}/{2}<\bar z<{\theta}/{2}+\lambda^{*}$) then, by the strong maximum principle, the function $U$ must be identically $0$ in $\left({\theta}/{2},{\theta}/{2}+\lambda^{*}\right).$ This cannot be true: if $U$ were identically $0$ on $(\theta/2,2),$ then by continuity we get that $U(\theta/2)=0$ and hence $h({\theta}/{2})=h(2-\lambda^{*}).$ As $\theta/2<2-\lambda^{*}<2$ and $h''\leq 0$ (with $h''\not\equiv0$), the strong maximum principle yields that $h$ is constant on $(\theta/2,2),$ which is a contradiction.    Thus the point $\bar z,$ where $U$ vanishes, must be equal to ${\theta}/{2}+\lambda^{*}.$ In such case, the equality $h^{\lambda^{*}}(\bar z)=h(\bar z)=h^{\lambda^{*}}(\lambda^{*}+{\theta}/{2})$ leads to $h(2)=h(\lambda^{*}+{\theta}/{2}).$ Our assumption that $\lambda^{*}<2-{\theta}/{2}$ and the strong maximum principle  applied to the differential equation $h''(z)+f(h(z))=0$ force $h$ to be identically equal to a positive constant over the whole interval $\left[{\theta}/{2},2\right]$, which contradicts $h'({\theta}/{2})>0$. Therefore, $\lambda^{*}=2-{\theta}/{2}$ and the proof of part (a) in  our lemma is complete. 
\end{proof}

  \begin{proof}[{\bf Proof of part (b) of Lemma \ref{the.function.h}.}] 
We fix $\beta>0$ and we write $h$ for $h_{\beta}.$ First, we note that the initial conditions on $h$ at $\theta/2$, i.e. $h(\theta/2)=\theta$ and $h'(\theta/2)=2>1,$ and the continuity of $h'$ yield the existence of $\tau>0$ such that 
\begin{equation}\label{step1}h(z)>z+\frac{\theta}{2},~\text{ for }~ \frac{\theta}{2}\leq z
\leq \frac{\theta}{2}+\tau.\end{equation}
 Denote by 
 \[q(z)=z+\frac{\theta}{2},~\text{ for }~  \frac{\theta}{2}\leq z\leq 2.\] 
 We will compare  $h$ to  $ q$ over the interval $[\theta/2,1]$ in order to arrive at the desired result.  To this end, we will use the sliding method, again, on the functions $h$ and $q^{\lambda},$ where $q^{\lambda}$ is defined by 
 \[\forall \,\lambda\in \left(0, 1-\frac{\theta}{2}\right),\quad  q^{\lambda}(z)=q(z+1-\frac{\theta}{2}-\lambda) ~\text{ for all }~ z\in [\theta/2,\theta/2+\lambda].\] 
 It suffices to compare $h$ to $q^{\lambda}$ on $ [\theta/2,\theta/2+\lambda],$ for all $ 0<\lambda< 1-{\theta}/{2}.$ From \eqref{step1}, we see that $h\geq q^{\lambda}$ on $ [\theta/2,\theta/2+\lambda],$ for $0<\lambda \leq\min\left\{\tau, 1-{\theta}/{2}\right\}.$ We set  \[\lambda^{*}:=\sup \left\{\lambda \in  \left(0,1-{\theta}/{2}\right)\text{ such that }h\geq q^{\mu} \text{ in }\left[{\theta}/{2},{\theta}/{2}+\mu\right]\text{ for all }\mu\leq \lambda \right\}.\]
 Thus, $\lambda^{*}\geq \min\left\{\tau, 1-{\theta}/{2}\right\}>0.$ Our goal is to prove that we will always have $h(1)\geq 1$. We claim that 
 \begin{enumerate}
 \item[(i)] either $\lambda^{*}=1-{\theta}/{2}.$ Thus, $h\geq q$ on $(\theta/2,1 )$ and  so $h(1)\geq 1+{\theta}/{2}>1$
 \item[(ii)] or $\lambda^{*}<1-{\theta}/{2}$ while $h(1)\geq 1+{\theta}/{2}>1.$
 \end{enumerate}
 We know that $\lambda^{*}\leq1-{\theta}/{2}.$ If $\lambda^{*}=1-{\theta}/{2},$  then we have $h\geq q$ on $({\theta}/{2}, 1)$ and thus assertion (i) holds.

  Suppose  in what follows that $\lambda^{*}<1-{\theta}/{2}.$ By continuity, we have $h\geq q^{\lambda^{*}}$ in $[\theta/2,\theta/2+\lambda^{*}].$ As in the previous proof, we can build a sequence $\{\lambda_{n}\}$ such that $\lambda_{n}>\lambda^{*}$ and $\lambda_{n}\rightarrow\lambda^{*}$ and a sequence $\{z_{n}\}_{n}$  in $(\theta/2,\theta/2+\lambda_{n})$ such that $h(z_{n})\leq q^{\lambda_{n}}(z_{n}).$ Thus, up to a subsequence,  $z_{n}\rightarrow\bar{z}$ as $n\rightarrow\infty,$ for some $\bar z\in [\theta/2,\theta/2+\lambda^{*}].$ Then, passing to the limit as $n\rightarrow+\infty,$ we get $h(\bar z)=q^{\lambda^{*}}(\bar z).$ Now let \[Q(z):=h(z)-q^{\lambda^{*}}(z).\]
 We know that $Q\geq 0$ in $[\theta/2,\theta/2+\lambda^{*}]$ and $Q(\bar z)=0.$ Moreover, the function $Q$ satisfies an ODE of the form 
  \begin{equation}\label{equationbyQ}\begin{array}{l}
  \beta Q''(z)+B(z)Q(z)=0\text{ for }z\in (\theta/2,\theta/2+\lambda^{*}),\\
Q(\theta/2)=\theta+\lambda^{*}-1-\frac{\theta}{2}\geq \lambda^{*}+\frac{\theta}{2}-1,\\
Q(\bar z)=0,
\end{array}
\end{equation}
  where $B(z)$ is obtained from the fact that $f$ is Lipschitz.

  If $\bar z$ is an interior point, i.e. ${\theta}/{2}<\bar z< \lambda^{*}+{\theta}/{2},$ then we appeal to \eqref{equationbyQ} and the strong maximum principle to obtain  that $Q\equiv 0$ on    $(\theta/2,\theta/2+\lambda^{*})$ or equivalently $h\equiv q^{\lambda^{*}}.$ This leads to $h'=(q^{\lambda^{*}})' \equiv 1$ and  contradicts the fact that $h'({\theta}/{2})=2.$  

Let us now inspect the case where $\bar z$ is a boundary point.   If $\bar z=\theta/2$ then \[h(\theta/2)=q^{\lambda^{*}}(\bar z)=\theta=\bar z+\frac{\theta}{2}+1-\frac{\theta}{2}-\lambda^{*}=1+\frac{\theta}{2}-\lambda^{*}.\]
  This yields that $\lambda^{*}=1-{\theta}/{2}$ and a  contradiction is obtained. The only possibility left is that  $\bar z=\lambda^{*}+{\theta}/{2}.$ In such case,   \[h(\lambda^{*}+{\theta}/{2})=q^{\lambda^{*}}(\bar z)=\bar z+{\theta}/{2}+1-{\theta}/{2}-\lambda^{*}=1+\theta/2.\]
As $h$ is increasing and $\lambda^{*}+{\theta}/{2}<1,$ it follows that $h(1)>h(\lambda^{*}+{\theta}/{2})=1+\theta/2>1.$
To summarize, if $\lambda^{*}<1-{\theta}/{2}$ we have $h(1)>1.$ Therefore, in both cases (whether $\lambda^{*}<1-{\theta}/{2}$ or $\lambda^{*}=1-{\theta}/{2}$), we have $h(1)>1.$

  \end{proof}

  \begin{proof}[{\bf Proof of part (c) of Lemma \ref{the.function.h}.}] We know from part (b) that $h(1)>1$ and, from  part (a), we know that $h$ is increasing. Thus, $h(2)>h(1)>1$ and the proof of the lemma is now complete. 
 \end{proof}

\subsubsection*{An extension of the function $h$, the solution to \eqref{IVP}.} 
\begin{definition}We extend the function $h$, whose existence and qualitative properties as a solution to \eqref{IVP} were proved in Lemma \ref{the.function.h} above, to the function $H$ over the interval $[0,2]$ as follows:

\begin{equation}H (z)=\left\{\begin{array}{ll}
2\left(z-\frac{\theta}{2}\right)+\theta& \text{ for }~~ 0<z\leq \frac{\theta}{2},\vspace{10 pt}\\
h(z) &\text{ for }~~\frac{\theta}{2}<z\leq 2.
\end{array}\right. \end{equation}
and
\begin{equation}
H''(z)\leq 0 ~\text{ for all }~0\leq z\leq 2.
\end{equation}
The function $H$ is in the class $C^{2}([0,2])$ and satisfies \begin{equation}\label{properties}H(0)=0 ~\text{ and }~ H(2)=h(2)\geq 1 \text{ (from part (c) of Lemma \ref{the.function.h}}).\end{equation}
\end{definition}

In the following proposition we show that a certain choice of $\beta$ makes  \[\bar \phi(x,y)=H(\varphi(x,Y)+\psi(x,Y'))\] a supersolution of equation \eqref{cequation}. 
\begin{proposition}\label{gen.supersol} Let  \begin{equation}\label{beta.s.t.}0<\beta \leq \min\{4 \mu^{2}\sin^{2}\alpha,\mu_{0}^{2}\sin^{2}\alpha\},\end{equation} where $\mu$ and $\mu_{0}$ are the positive constants defined in \eqref{derivativebound1} and \eqref{derivativebound2} above. Let $h:=h_{\beta}$ be the solution of the corresponding initial value problem \eqref{IVP} {\rm(}i.e. the solution to \eqref{IVP} for $\beta$ satisfying \eqref{beta.s.t.}{\rm)}. Then  the function \begin{equation}\label{def.phibar}
\bar \phi(x,y):=H(\varphi(x,Y)+\psi(x,Y'))\end{equation} is a supersolution to equation \eqref{cequation}.
\end{proposition}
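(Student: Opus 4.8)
The plan is to compute directly the elliptic operator $L\bar\phi := \Delta\bar\phi + (q(x)-c)\partial_y\bar\phi + f(\bar\phi)$ applied to $\bar\phi(x,y) = H(\varphi(x,Y)+\psi(x,Y'))$, where $Y = x\cos\alpha + y\sin\alpha$ and $Y' = -x\cos\alpha + y\sin\alpha$, and show $L\bar\phi \le 0$ everywhere. Writing $S := \varphi(x,Y)+\psi(x,Y')$, the chain rule gives $\nabla\bar\phi = H'(S)\nabla S$ and $\Delta\bar\phi = H'(S)\Delta S + H''(S)|\nabla S|^2$; similarly $\partial_y\bar\phi = H'(S)\partial_y S$. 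Now $\varphi$ and $\psi$ solve the planar equations recorded in Lemma~\ref{subsolution}, i.e. after the change of variables $\phi_1(x,y)=\varphi(x,Y)$ satisfies $\Delta\phi_1 + (q(x)-c)\partial_y\phi_1 = -f(\varphi)$ and likewise for $\phi_2(x,y)=\psi(x,Y')$. Adding, the ``linear in $H'(S)$'' part collapses:
\[
H'(S)\big(\Delta S + (q(x)-c)\partial_y S\big) = -H'(S)\big(f(\varphi)+f(\psi)\big).
\]
Hence
\[
L\bar\phi = H''(S)|\nabla S|^2 - H'(S)\big(f(\varphi)+f(\psi)\big) + f(H(S)).
\]
Since Lemma~\ref{the.function.h}(a) and the extension give $H' > 0$ and $H'' \le 0$, the first term is $\le 0$; the remaining job is to control $f(H(S)) - H'(S)(f(\varphi)+f(\psi))$, and this is where the ODE \eqref{IVP} and the region decomposition enter.

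The key computation is to estimate $|\nabla S|^2$ from below on the region $\mathcal Z$ where $f(H(S))$ can be positive. We have $\nabla S = \partial_2\varphi(x,Y)\nabla Y + \partial_2\psi(x,Y')\nabla Y' + (\partial_1\varphi + \partial_1\psi, 0)$-type terms; more carefully, writing $\varphi_Y := \partial_2\varphi(x,Y)$ and $\psi_{Y'} := \partial_2\psi(x,Y')$ and using $\nabla Y = (\cos\alpha, \sin\alpha)$, $\nabla Y' = (-\cos\alpha,\sin\alpha)$ together with the $x$-derivatives $\varphi_X, \psi_X$, the $y$-component of $\nabla S$ is exactly $(\varphi_Y + \psi_{Y'})\sin\alpha$, so
\[
|\nabla S|^2 \ge (\partial_y S)^2 = (\varphi_Y + \psi_{Y'})^2\sin^2\alpha.
\]
Therefore it suffices to show
\[
H''(S)(\varphi_Y+\psi_{Y'})^2\sin^2\alpha - H'(S)\big(f(\varphi)+f(\psi)\big) + f(H(S)) \le 0,
\]
and I would argue this on the three regions $\mathcal H$, $\mathcal C$, $\mathcal Z$ of \eqref{setZandCandH}. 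On $\mathcal H$ one has $Y\ge M_3$, $Y'\ge M_4$, so $\varphi,\psi\ge 1/2$, hence $S\ge 1$, hence $H(S)\ge H(1) = h(1) > 1$ (part (b)) so $f(H(S)) = 0$; also $f(\varphi)+f(\psi)\ge 0$ and $H'\ge 0$, $H''\le 0$ finish it. On $\mathcal C$ one has $Y\le M_1$, $Y'\le M_2$, so $\varphi,\psi\le\theta/2$, hence $f(\varphi)=f(\psi)=0$ and $S\le\theta$; but on the range $0<S\le\theta$, $H(S) = 2(S-\theta/2)+\theta$ on $(0,\theta/2]$ and $h(S)$ on $(\theta/2,\theta]$ — and since $h$ solves $\beta h'' + f(h) = 0$ with $h(z)>z+\theta/2 > \theta \ge$ (on this sub-range actually $h$ may exceed $1$ near $z=\theta$; need $f(H(S))$ controlled) — I need $f(H(S)) \le 0$, equivalently $H(S)\le\theta$ or $H(S)\ge 1$ there; this requires a short argument, possibly shrinking the definition, but the linear piece has $H''\equiv 0$ and $f(h(z)) = -\beta h''(z)\ge 0$ on the $h$-piece, so $H'' \le 0$ and the ODE gives $f(H(S)) = -\beta H''(S) \cdot \mathbf 1_{S>\theta/2}$ plus boundary care — so in fact $f(H(S)) + \beta H''(S)\mathbf 1_{\{S>\theta/2\}} = 0$ on $(0,\theta]$ by definition of $h$, which is the seed of the general estimate.

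The heart of the matter, and the step I expect to be the main obstacle, is the region $\mathcal Z$. There, by definition, at least one of $Y\in[M_1,M_3]$ or $Y'\in[M_2,M_4]$ holds, or one of $Y\in[M_0,M_1]$, $Y'\in[M_0',M_2]$ — the reason \eqref{derivativebound1} and \eqref{derivativebound2} were set up — so we get a lower bound $\varphi_Y + \psi_{Y'} \ge \mu$ or $\ge \mu_0$. Say $\varphi_Y + \psi_{Y'}\ge\mu$; then
\[
H''(S)(\varphi_Y+\psi_{Y'})^2\sin^2\alpha \le \mu^2\sin^2\alpha\, H''(S)
\]
(using $H''\le 0$ to flip the inequality direction safely only after checking the sign, which is why $H''\le 0$ globally is essential). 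Choosing $\beta \le 4\mu^2\sin^2\alpha$ (resp. $\le\mu_0^2\sin^2\alpha$) as in \eqref{beta.s.t.}, and invoking the ODE $\beta h''(S) = -f(h(S))$, we replace $\mu^2\sin^2\alpha\, H''(S)$ by something $\le \tfrac14 \beta^{-1}\beta\, H''(S)\cdot$const $= $ a multiple of $-f(H(S))$; the factor $4$ versus the factor $2$ in $h'(\theta/2)=2$ is what should give the slack to absorb $+f(H(S))$ and still dominate the harmless term $-H'(f(\varphi)+f(\psi))\le 0$. I would carry this out by splitting $\mathcal Z$ further according to whether $S \le \theta/2$, $\theta/2 < S \le \theta$, or $S > \theta$ (on the latter the monotone $H$ has $H(S)\ge H(\theta)$ and one pushes toward $f(H(S))=0$ via part (b)/(c) once $S$ is large enough, with the intermediate band $[\theta,\,?]$ handled by the ODE estimate). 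The bookkeeping of which of $\varphi,\psi$ is small (so its $f$ vanishes) versus which has a derivative lower bound, combined with keeping $H'(S)$ bounded above — which follows since $H' $ is continuous on the compact $[0,2]$ and $H'(\theta/2)=2$, with $h''\le 0$ forcing $H'$ to decrease, so $0 < H'(S)\le 2$ — is the delicate part, but no single estimate is deep; it is the case analysis matching the sets $E_1, E_2, \mathcal H, \mathcal C, \mathcal Z$ to the ODE inequality that constitutes the real work.
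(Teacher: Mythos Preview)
Your overall architecture is exactly the paper's: compute $L\bar\phi$, drop the full gradient to $(\partial_y S)^2=(\varphi_Y+\psi_{Y'})^2\sin^2\alpha$ via $H''\le 0$, and then run a case analysis on $\mathcal H,\mathcal C,\mathcal Z$ using the ODE $\beta h''+f(h)=0$. Your treatment of $\mathcal H$ is correct and identical to the paper's. But two points in the remaining cases are misplaced or mis-argued.

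\textbf{The $\mathcal C$ case.} You correctly observe that on $\mathcal C$ one has $S\le\theta$, $f(\varphi)=f(\psi)=0$, and that for $S>\theta/2$ the ODE yields $f(H(S))+\beta H''(S)=0$. But this identity alone does not finish the case: what you must show is $f(H(S))+H''(S)(\varphi_Y+\psi_{Y'})^2\sin^2\alpha\le 0$, and for that you need $(\varphi_Y+\psi_{Y'})^2\sin^2\alpha\ge\beta$ \emph{on this sub-region of $\mathcal C$}. This is precisely where $\mu_0$ enters, not $\mu$, and not in $\mathcal Z$ as you suggest. The paper's mechanism is: $S>\theta/2$ forces one of $\varphi,\psi>\theta/4$; if say $\varphi(x,Y)>\theta/4$, then by the definition of $M_0$ one has $Y\ge M_0$, while membership in $\mathcal C$ gives $Y\le M_1$, so \eqref{derivativebound2} yields $\varphi_Y\ge\mu_0$, hence $(\varphi_Y+\psi_{Y'})^2\sin^2\alpha\ge\mu_0^2\sin^2\alpha\ge\beta$. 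Your sentence ``I need $f(H(S))\le 0$ \ldots possibly shrinking the definition'' is heading the wrong way; no shrinking is needed, and $f(H(S))$ is genuinely positive there---it is absorbed by the $H''$ term via $\mu_0$.

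\textbf{The $\mathcal Z$ case and the factor $4$.} The constant $4$ in $4\mu^2\sin^2\alpha$ has nothing to do with $h'(\theta/2)=2$; it comes from $(\varphi_Y+\psi_{Y'})^2\ge(2\mu)^2$ once one knows \emph{both} $\varphi_Y\ge\mu$ and $\psi_{Y'}\ge\mu$ on $\mathcal Z$, which is what the paper invokes from \eqref{derivativebound1}. Your description ``at least one of $Y\in[M_1,M_3]$ or $Y'\in[M_2,M_4]$'' is not the content of $\mathcal Z$, and a bound on only one summand would give $(\varphi_Y+\psi_{Y'})^2\ge\mu^2$, not $4\mu^2$. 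Also, the term $-H'(S)(f(\varphi)+f(\psi))$ is always $\le 0$ since $H'>0$ and $f\ge 0$; there is no need to bound $H'$ from above, and the further splitting of $\mathcal Z$ by the size of $S$ that you propose is unnecessary---once $(\varphi_Y+\psi_{Y'})^2\sin^2\alpha\ge\beta$, the ODE closes the estimate in one line regardless of the value of $S$.
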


\begin{proof}[Proof of Proposition \ref{gen.supersol}]
First, we note that 
$\overline{\phi}$ satisfies the following limiting conditions \[\lim_{l\rightarrow+\infty}\inf_{(x,y)\in C_{\alpha,l}^{+}}\overline\phi (x,y)=H(2)=h(2)\geq 1 \text{ (from Part (c) in Lemma \ref{the.function.h})}\] and 
 \[\lim_{l\rightarrow-\infty}\sup_{(x,y)\in C_{\alpha,l}^{-}}\overline\phi (x,y)=H(0)=0. \]
 
 \noindent Now we compute
$$\begin{array}{l}
\Delta\overline{\phi}(x,y)=H'(\varphi+\psi)\left[\nabla\cdot(A\nabla \varphi)+\nabla\cdot(B\nabla \psi)\right]+\vspace{7 pt}\\
H ''(\varphi+\psi)\left[(\partial_1\varphi+\partial_1\psi+\cos\alpha\partial_2\varphi+\cos\alpha\partial_2\psi)^2+\sin^2\alpha(\partial_2\varphi+\partial_2\psi)^2\right],
\end{array}$$ 
and $$\left(q(x)-c\right)\partial_y\overline \phi(x,y)=(q(x)-c)\sin\alpha H'(\varphi(x,Y)+\psi(x,Y'))[\partial_2\varphi+\partial_2\psi].$$
Thus \begin{equation}\label{phibar}
\begin{array}{l}
\Delta\overline{\phi}(x,y)+\left(q(x)-c\right)\partial_y\overline \phi(x,y)+f(\overline{\phi})=\vspace{9 pt}\\
f(H(\varphi+\psi))-H'(\varphi+\psi)[f(\varphi)+f(\psi)]~+\vspace{9 pt}\\
H''(\varphi+\psi)\left[(\partial_1\varphi+\partial_1\psi+\cos\alpha\partial_2\varphi+\cos\alpha\partial_2\psi)^2+\sin^2\alpha(\partial_2\varphi+\partial_2\psi)^2\right].
\end{array} 
\end{equation}

\noindent Since $H''\leq0$ in $[0,2]$, \eqref{phibar} yields \begin{equation}\label{phibar1}
\begin{array}{c}
\Delta\overline{\phi}(x,y)+\left(q(x)-{c}\right)\partial_y\overline \phi(x,y)+f(\overline{\phi})\vspace{7 pt}\\
\leq  f(H(\varphi+\psi))-H'(\varphi+\psi)[f(\varphi)+f(\psi)]\vspace{7 pt}\\
+H''(\varphi+\psi)\left[\sin^2\alpha(\partial_2\varphi+\partial_2\psi)^2\right].
\end{array} 
\end{equation}
 
\noindent The following is to show that the properties of $h$ mentioned in Lemma \ref{the.function.h} together with those satisfied by the pulsating fronts $\phi_{1,2}$ are sufficient to make the right hand side of \eqref{phibar1} nonpositive  everywhere in the plane $\R^{2}.$ Let $(x,y)\in \R^{2}$ and recall that 
 \[\R^{2}= \mathcal{C}\cup\mathcal{Z}\cup\mathcal{H},\]
where $\mathcal{C},~\mathcal{Z},$ and $\mathcal{H}$ are as defined in \eqref{setZandCandH} above.

\paragraph{Case 1: $(x,y)\in \mathcal{C}$.} Here we have $Y\leq M_{1}<0$ and $Y'\leq M_{2}<0.$ Hence, 
\[\phi_{1}(x,y)+\phi_{2}(x,y)\leq \theta~\text{ from \eqref{M1234}},\] and so \[f(\varphi(x,Y))=f(\psi(x,Y'))=0.\]
Note  that $f(h(\varphi+\psi))$ is {\em not} necessarily equal to zero {\em everywhere} in $\mathcal{C}$ as we only know that $\phi_{1}+\phi_{2}\leq \theta$ (recall that $f\equiv0$ on $[0,\theta]$ and $f>0$ on $(\theta,1)$). In order to ensure that the right hand side of \eqref{phibar1} is nonpositive we have to extract more information from the term $H''(\varphi+\psi)\left[\sin^2\alpha(\partial_2\varphi+\partial_2\psi)^2\right]$ when $(x,y)\in \mathcal{C}.$ We distinguish two subcases: 

 \paragraph{\textit{Case 1.a: $(x,y)\in \mathcal{C}$ and $H(\varphi(x,Y)+\psi(x,Y'))\leq \theta$}.} 
 In this case, the terms \[f(H(\varphi+\psi))~\text{ and }~h'(\varphi+\psi)[f(\varphi)+f(\psi)]\] in \eqref{phibar1} both vanish. Therefore, as $H''\leq 0$ on $[0,2],$ \eqref{phibar1} yields that 
\[\Delta\overline{\phi}(x,y)+\left(q(x)-c\right)\partial_y\overline \phi(x,y)+f(\overline{\phi})
\leq H''(\varphi+\psi)\left[\sin^2\alpha(\partial_2\varphi+\partial_2\psi)^2\right]\leq 0.\]

\paragraph{\textit{Case 1.b: $(x,y)\in \mathcal{C}$ while $\theta<H(\varphi(x,Y)+\psi(x,Y'))<1$}.}
 For such $(x,y)$ we have  \[f(H(\varphi(x,Y)+\psi(x,Y')))>0\] because  $\theta<H(\varphi(x,Y)+\psi(x,Y'))=h(\varphi(x,Y)+\psi(x,Y'))<1.$ By Lemma \ref{the.function.h}, the function  $h$ is strictly increasing on $[0,2].$  Part (b) of Lemma \ref{the.function.h} leads to \[h^{-1}(1)>\varphi(x,Y)+\psi(x,Y')>h^{-1}(\theta)>{\frac{\theta}{2}}>0.\]
 The latter inequality implies that either $\varphi(x,Y)>\frac{\theta}{4}>0$ or $\psi(x,Y')>\frac{\theta}{4}$. Without loss of generality, we assume that $\theta\geq \varphi(x,Y)>\frac{\theta}{4}>0$ is what holds (if not, the same argument can be followed by using $\psi(x,Y')>\frac{\theta}{4}$) with $(x,y)\in \mathcal{C}$. By \eqref{M0}, while $(x,y)\in \mathcal{C},$  we  must have \[M_{1}\geq Y:=x\cos\alpha+y\sin\alpha\geq M_{0},\]
 in which case the derivative bound \eqref{derivativebound2} is valid on $\partial_{2}\varphi.$
 Therefore, keeping in mind that $H''\leq 0$, the right hand side of \eqref{phibar1} can be bounded above  as follows \[\begin{array}{l}
 f(H(\varphi+\psi))-H'(\varphi+\psi)[f(\varphi)+f(\psi)] +H''(\varphi+\psi)\left[\sin^2\alpha(\partial_2\varphi+\partial_2\psi)^2\right]\vspace{7 pt}\\
 = f(h(\varphi+\psi))-h'(\varphi+\psi)[f(\varphi)+f(\psi)] +h''(\varphi+\psi)\left[\sin^2\alpha(\partial_2\varphi+\partial_2\psi)^2\right]\vspace{7 pt}\\
 \leq  f(h(\varphi+\psi))+(\sin^{2}\alpha) \mu_{0}^{2}h''(\varphi+\psi)~\text{ as }~[f(\varphi)+f(\psi)]=0\vspace{7 pt}\\
 \leq f(h(\varphi+\psi))+\beta h''(\varphi+\psi);\text{ provided that }\beta\leq \mu_{0}^{2}\sin^{2}\alpha\vspace{7 pt}\\
 =0,
 \end{array}\]
 where we have used the fact  $\partial_{2}\psi>0$ in $\R^{2}$.

\paragraph{Case 2: $(x,y)\in \mathcal{Z}$.} The choices made in \eqref{derivativebound1} guarantee that $\partial_{2}\phi(x,Y)\geq \mu \text { and }\partial_{2}\psi(x,Y')\geq \mu$ whenever $(x,y)\in\mathcal {Z}.$  Then, as $H'=h'>0,$
the right hand side of \eqref{phibar1}  can be bounded above as
\begin{equation*}
\begin{array}{l}
f(H(\varphi+\psi))-H'(\varphi+\psi)[f(\varphi)+f(\psi)] +H''(\varphi+\psi)\left[\sin^2\alpha(\partial_2\varphi+\partial_2\psi)^2\right] \vspace{7pt}\\
\leq f(H(\varphi+\psi))+\beta H''(\varphi+\psi)\text{ provided that }\beta \leq 4\mu^{2}\sin^{2}\alpha \vspace{7pt}\\
=f(h(\varphi+\psi))+\beta h''(\varphi+\psi)\text{ provided that }\beta \leq 4\mu^{2}\sin^{2}\alpha \vspace{7pt}\\
 \leq 0.
\end{array}
\end{equation*}

\paragraph{Case 3:  $(x,y)\in \mathcal{H}.$} In this case we have $Y\geq M_{3}$ and $Y'\geq M_{4}$ and 
 \begin{equation}\label{sum.above.theta}\varphi(x,Y)+\psi(x,Y')\geq1.
 \end{equation} 
 Since $h$ is increasing, then \eqref{sum.above.theta} and Part (b) of Lemma \ref{the.function.h} yield \[H(\varphi(x,Y)+\psi(x,Y'))=h(\varphi(x,Y)+\psi(x,Y'))\geq h(1)\geq 1~\hbox{ for all }~(x,y)\in \mathcal{H};\]  and hence, the term $f(h(\varphi(x,Y)+\psi(x,Y')))\equiv 0$ when  $(x,y)\in\mathcal{H}.$ Again using $h''\leq 0$ and $h'>0$ on $[0,2]$ we obtain \[-h'(\varphi+\psi)[f(\varphi)+f(\psi)]+h''(\varphi+\psi)\left[\sin^2\alpha(\partial_2\varphi+\partial_2\psi)^2\right]\leq0.\]
Looking at the right hand side  of \eqref{phibar1}, we are now able to conclude that \[\text{for all }(x,y)\in \mathcal{H},\quad\Delta\bar\phi(x,y)+(q(x)-{c})\partial_{y}\bar\phi(x,y)+f(\bar\phi (x,y))\leq0.\]
The proof of Proposition \ref{gen.supersol} is now complete.
\end{proof}

\subsection*{Proof of the existence result in Theorem \ref{existence}}
The existence of a solution follows from the existence of a supersolution and a subsolution. A supersolution to \eqref{cequation} with the conical conditions \eqref{concon} is constructed via the function $\overline{\phi}$ in Proposition \ref{gen.supersol} above. The subsolution is the function $\underline{\phi}$ constructed in Lemma \ref{subsolution} above.

\section{Proof of uniqueness and monotonicity}
Before stating the comparison principles that will be the main tool in proving the monotonicity of the solution,  let us introduce some notations and assumptions that we need in the following statements:

For each $l\in\mathbb R,$ $\alpha,\beta\in(0,\pi)$, we consider $A(x,y)=(A_{ij}(x,y))_{1\leq i,j\leq N}$ as a symmetric $C^{1,\delta}\big(\overline{C_{\alpha,l}^+}\big)$ matrix field satisfying
\begin{eqnarray}\label{cA}\left\{\begin{array}{l}
\exists \,0<\alpha_1\leq\alpha_2, \; \forall(x,y)\in\overline{C_{\alpha,l}^+},\;\forall \xi\in\mathbb{R}^2, \vspace{4 pt}\\
\displaystyle{\alpha_1|\xi|^2 \leq\sum_{1\leq i,j\leq 2}A_{ij}(x,y)\xi_i\xi_j\leq\alpha_2|\xi|^2}.\end{array}\right.
\end{eqnarray}
The set
$$\begin{array}{ll}
\partial C_{\alpha,l}^+:=&\Big\{(x,y)\in\mathbb R^2,~y=-x\cot\alpha+l\hbox{ when }x\geq0,\vspace{3pt}\\
&\quad\quad\quad\quad\hbox{and }y=x\cot\alpha+l\hbox{ when }x\leq0\Big\}\end{array}$$
denotes the boundary of the subset $C_{\alpha,l}^+$ which was introduced in Definition \ref{cones}, and
$$\hbox{dist}\left((x,y);\partial C_{\alpha,l}^+\right)$$
stands for the Euclidean distance from $(x,y)\in\mathbb R^2$ to the boundary $\partial C_{\alpha,l}^+.$
The following is a comparison principle that fits our problem, in a conical setting. This result was proved in \cite{ehh} which is a joint work of the author with F. Hamel and R. Huang.  
\begin{lemma}[\cite{ehh}]\label{comp.principle} 
Let $\alpha \in (0,\pi)$ and $l\in\mathbb{R}$. Let $g(x,y,u)$ be a globally bounded and a globally Lipschitz-continuous function defined in $\overline{C_{\alpha,l}^+}\times\mathbb{R}.$ Assume that $g$ is non-increasing with respect to $u$ in $\mathbb R^2\times[1-\rho,+\infty)$ for some $\rho>0.$ Let $\tilde{q}=\left(q_1(x,y),q_2(x,y)\right)$ be a globally bounded $C^{0,\delta}\big(\overline{C_{\alpha,l}^+}\big)$ vector field $($with $\delta>0)$ and let $A(x,y)=(A_{ij}(x,y))_{1\leq i,j\leq 2}$ be a symmetric $C^{2,\delta}\big(\overline{C_{\alpha,l}^+}\big)$ matrix field satisfying~$(\ref{cA})$.\par
Assume that $\phi^{1}(x,y)$ and $\phi^{2}(x,y)$ are two bounded uniformly continuous functions defined in $\overline{C_{\alpha,l}^+}$ of class $C^{2,\mu}\big(\overline{C_{\alpha,l}^+}\big)$ (for some $\mu>0$). Let  $L$ be the elliptic operator defined by
$$L\phi:=\nabla_{x,y}\cdot(A\nabla_{x,y}\phi)+\tilde{q}(x,y)\cdot\nabla_{x,y}\phi.
$$ and assume that
$$\left\{\begin{array}{ccc}
L\,\phi^{1}+g(x,y,\phi^{1})&\geq&0~\hbox{ in }~C_{\alpha,l}^+,\vspace{5 pt}\\
L\,\phi^{2}+g(x,y,\phi^{2})&\leq&0~\hbox{ in }~C_{\alpha,l}^+,\vspace{5 pt}\\   
\phi^{1}(x,y)\leq \phi^{2}(x,y)&&~~\hbox{on }~\partial C_{\alpha,l}^+ ,\end{array}\right.$$
and that
\begin{equation}\label{as.distance.goes.to.infty}
\displaystyle{\limsup_{\displaystyle{(x,y)\in C_{\alpha,l}^+,\,{\rm dist}\left((x,y);\partial C_{\alpha,l}^+\right)\rightarrow+\infty}}\,[\phi^{1}(x,y)-\phi^{2}(x,y)]}\leq0.
\end{equation}

If $\phi^{2}\geq 1-\rho$ in $\overline{C_{\alpha,l}^+},$ then
$$\phi^{1}\leq\phi^{2}~~\hbox{in}~~\overline{C_{\alpha,l}^+}.$$
\end{lemma}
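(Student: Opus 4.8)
The plan is to argue by contradiction, using the standard strategy of sliding a translate of $\phi^2$ until it just touches $\phi^1$ and then deriving a contradiction from the maximum principle together with the hypothesis $\phi^2\geq 1-\rho$, on which the monotonicity of $g$ in $u$ depends. First I would reduce to showing that $\sup_{\overline{C_{\alpha,l}^+}}(\phi^1-\phi^2)\leq 0$. Suppose not, so this supremum is some $m>0$. By the boundary inequality $\phi^1\leq\phi^2$ on $\partial C_{\alpha,l}^+$ and by hypothesis \eqref{as.distance.goes.to.infty}, the positive part $(\phi^1-\phi^2)^+$ vanishes on $\partial C_{\alpha,l}^+$ and tends to $0$ as $\mathrm{dist}((x,y);\partial C_{\alpha,l}^+)\to+\infty$. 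Since $\phi^1,\phi^2$ are bounded and uniformly continuous, a standard argument (taking a maximizing sequence and shifting it into a fixed compact set, or directly invoking that the supremum $m$ is attained) shows $m$ is attained at some point $(x_0,y_0)$ in the interior $C_{\alpha,l}^+$.

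Next I would localize at $(x_0,y_0)$. On a neighbourhood where both $\phi^1$ and $\phi^2$ lie in $[1-\rho,+\infty)$ — which holds because $\phi^2(x_0,y_0)\geq 1-\rho$ and $\phi^1(x_0,y_0)=\phi^2(x_0,y_0)+m> \phi^2(x_0,y_0)\geq 1-\rho$, and by continuity this persists nearby — set $w:=\phi^1-\phi^2$. Subtracting the two differential inequalities gives
\[
L w + g(x,y,\phi^1)-g(x,y,\phi^2)\geq 0
\]
in that neighbourhood. Writing $g(x,y,\phi^1)-g(x,y,\phi^2)=b(x,y)\,w$ with $b(x,y)\leq 0$ there (using that $g$ is non-increasing in $u$ on $[1-\rho,+\infty)$, together with Lipschitz continuity to get $b$ bounded), we obtain $Lw + b\,w\geq 0$ with $b\leq 0$. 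Since $w$ attains an interior positive maximum $m>0$ at $(x_0,y_0)$ and $c(x,y):=b(x,y)\leq 0$, the strong maximum principle for the operator $L+b$ (which has the form $\nabla\cdot(A\nabla\cdot)+\tilde q\cdot\nabla +b$ with $A$ uniformly elliptic and $C^{2,\delta}$, $\tilde q$ bounded $C^{0,\delta}$, $b$ bounded) forces $w\equiv m$ on the connected neighbourhood, hence — by a standard continuation/open-closed argument along $C_{\alpha,l}^+$, which is connected — $w\equiv m$ on all of $C_{\alpha,l}^+$, and by continuity up to $\overline{C_{\alpha,l}^+}$. But then $w\equiv m>0$ on $\partial C_{\alpha,l}^+$, contradicting $\phi^1\leq\phi^2$ there. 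Therefore $m\leq 0$, i.e. $\phi^1\leq\phi^2$ in $\overline{C_{\alpha,l}^+}$.

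The main obstacle I expect is the assertion that the supremum $m$ is actually attained at an interior point of $C_{\alpha,l}^+$: the domain is unbounded, so one cannot simply invoke compactness. This is handled by combining the two decay inputs — $w\leq 0$ on $\partial C_{\alpha,l}^+$ and $\limsup w\leq 0$ as the distance to $\partial C_{\alpha,l}^+$ goes to infinity — which together confine any near-maximizing sequence to a bounded region; uniform continuity (or, if one prefers, interior elliptic estimates giving local $C^1$ bounds on $w$) then yields a genuine maximum point, which must be interior since $w$ is strictly negative or zero on the boundary but equals $m>0$ somewhere. A secondary technical point is verifying that the sign condition $b\leq 0$ is available precisely where it is needed: this is exactly why the hypothesis $\phi^2\geq 1-\rho$ in $\overline{C_{\alpha,l}^+}$ is imposed, since it guarantees both competitors stay in the region where $g$ is monotone in $u$ once we know $\phi^1\geq\phi^2$ at the contact point.
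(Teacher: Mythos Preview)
The paper does not give its own proof of this lemma; it is quoted from \cite{ehh}. Your overall strategy is the standard one and is correct in outline, but there is a genuine gap precisely at the step you flag as the main obstacle. You claim that the boundary inequality together with \eqref{as.distance.goes.to.infty} ``confine any near-maximizing sequence to a bounded region.'' This is false: those two inputs only confine such a sequence to a strip of the form $\{\delta\le\mathrm{dist}((x,y),\partial C_{\alpha,l}^+)\le R\}$, and since $\partial C_{\alpha,l}^+$ consists of two half-lines, that strip is unbounded---it extends to infinity along each edge of the cone. A maximizing sequence can therefore run off to infinity parallel to one edge, and the supremum $m$ need not be attained anywhere.

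The cure is the translation--compactness argument you mention parenthetically but then abandon in favour of the incorrect ``bounded region'' claim. Take a maximizing sequence $(x_n,y_n)$, translate all data by $-(x_n,y_n)$, and extract subsequential limits: $\phi^1,\phi^2$ converge in $C^2_{\rm loc}$ by the $C^{2,\mu}$ bounds, and $A,\tilde q,g$ converge locally uniformly by their H\"older and Lipschitz bounds. Because the points $(x_n,y_n)$ stay at bounded distance from $\partial C_{\alpha,l}^+$, the shifted domains converge to a half-plane (or to a translate of the cone if the sequence stays near the vertex). The limit $w_\infty$ now genuinely attains its maximum $m>0$ at the origin; on the set $\{w_\infty>0\}$ one has $\phi^1_\infty>\phi^2_\infty\ge 1-\rho$, so the monotonicity of $g$ gives $L_\infty w_\infty\ge 0$ there, and the strong maximum principle forces $w_\infty\equiv m$ on that component. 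This contradicts $w_\infty\le 0$ on the boundary of the limit domain (inherited from $\phi^1\le\phi^2$ on $\partial C_{\alpha,l}^+$). Note that the shifted coefficients are not the original ones, so you cannot ``shift into a fixed compact set'' while keeping the same equation; passing to limits of the coefficients is essential.
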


Changing $s$ into $-s$ in Lemma \ref{comp.principle} leads to the following:
\begin{lemma}\label{comp.principle1} 
Let $\alpha$ and $\beta\in (0,\pi)$ and $l\in\mathbb{R}$. Let $g(x,y,u)$ be a globally bounded and a globally Lipschitz-continuous function defined in $\overline{C_{\alpha,l}^-}\times\mathbb{R}.$ Assume that $g$ is non-increasing with respect to $u$ in $\mathbb R^2\times(-\infty,\delta]$ for some $\delta>0.$ Let $\tilde{q}=\left(q_1(x,y),q_2(x,y)\right)$ be a globally bounded $C^{0,\kappa}\big(\overline{C_{\alpha,l}^-}\big)$ vector field $($with $\kappa>0)$ and let $A(x,y)=(A_{ij}(x,y))_{1\leq i,j\leq 2}$ be a symmetric $C^{2,\kappa}\big(\overline{C_{\alpha,l}^-}\big)$ matrix field satisfying~$(\ref{cA})$.\par
Assume that $\phi^{1}(x,y)$ and $\phi^{2}(x,y)$ are two bounded uniformly continuous functions defined in $\overline{C_{\alpha,l}^-}$ of class $C^{2,\mu}\big(\overline{C_{\alpha,l}^-}\big)$ (for some $\mu>0$). Let  $L$ be the elliptic operator defined by
$$L\phi:=\nabla_{x,y}\cdot(A\nabla_{x,y}\phi)+\tilde{q}(x,y)\cdot\nabla_{x,y}\phi.
$$ and assume that
$$\left\{\begin{array}{ccc}
L\,\phi^{1}+g(x,y,\phi^{1})&\geq&0~\hbox{ in }~C_{\alpha,l}^-,\vspace{5 pt}\\
L\,\phi^{2}+g(x,y,\phi^{2})&\leq&0~\hbox{ in }~C_{\alpha,l}^-,\vspace{5 pt}\\   
\phi^{1}(x,y)\leq \phi^{2}(x,y)&&~~\hbox{on }~\partial C_{\alpha,l}^- ,\end{array}\right.$$
and that
\begin{equation}\label{as.distance.goes.to.infty1}
\displaystyle{\limsup_{\displaystyle{(x,y)\in C_{\alpha,l}^-,\,{\rm dist}\left((x,y);\partial C_{\alpha,l}^+\right)\rightarrow+\infty}}\,[\phi^{1}(x,y)-\phi^{2}(x,y)]}\leq0.
\end{equation}

If $\phi^{1}\leq \delta$ in $\overline{C_{\alpha,l}^-},$ then
$$\phi^{1}\leq\phi^{2}~~\hbox{in}~~\overline{C_{\alpha,l}^-}.$$
\end{lemma}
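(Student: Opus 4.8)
The plan is to deduce this from Lemma~\ref{comp.principle} by means of the symmetry $(x,y,u)\mapsto(x,2l-y,-u)$, which turns the lower cone into an upper cone and interchanges the roles of sub- and supersolution. First I would record the elementary geometry: writing $C^{\pm}_{\alpha,l}=\{(x,y)\in\R^{2}:\ \pm\bigl(y-l+|x|\cot\alpha\bigr)\geq0\}$ and using $\cot(\pi-\alpha)=-\cot\alpha$, the affine isometry $\Psi(x,y):=(x,2l-y)$ (an involution, with linear part $R:=\mathrm{diag}(1,-1)$) maps $\overline{C^{+}_{\pi-\alpha,l}}$ bijectively onto $\overline{C^{-}_{\alpha,l}}$, sends $\partial C^{+}_{\pi-\alpha,l}$ onto $\partial C^{-}_{\alpha,l}$, and preserves the Euclidean distance to the boundary. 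Since $\pi-\alpha\in(0,\pi)$, Lemma~\ref{comp.principle} is applicable on $C^{+}_{\pi-\alpha,l}$.

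Next I would pull all the data back to $C^{+}_{\pi-\alpha,l}$ by setting, for $(x,y)$ in that cone and $u\in\R$,
\[
\widehat\phi^{\,1}:=-\,\phi^{2}\circ\Psi,\qquad
\widehat\phi^{\,2}:=-\,\phi^{1}\circ\Psi,\qquad
\widehat g(x,y,u):=-\,g\bigl(\Psi(x,y),-u\bigr),
\]
\[
\widehat A:=R\,(A\circ\Psi)\,R,\qquad
\widehat q:=R\,(\tilde q\circ\Psi),\qquad
\widehat L\,\phi:=\nabla_{x,y}\!\cdot\!\bigl(\widehat A\,\nabla_{x,y}\phi\bigr)+\widehat q\cdot\nabla_{x,y}\phi.
\]
Note the swap of superscripts $1\leftrightarrow2$: after composing with $u\mapsto-u$, the supersolution $\phi^{2}$ of the present statement becomes a subsolution and $\phi^{1}$ a supersolution. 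The verifications are routine: $\widehat A$ is symmetric, of class $C^{2,\kappa}$, and satisfies~\eqref{cA} with the same constants $\alpha_{1},\alpha_{2}$ (because $|R\xi|=|\xi|$); $\widehat q$ is bounded and $C^{0,\kappa}$; $\widehat g$ is bounded, globally Lipschitz, and — since $g$ is non-increasing in $u$ on $\R^{2}\times(-\infty,\delta]$ — the function $\widehat g$ is non-increasing in $u$ on $\R^{2}\times[-\delta,+\infty)$; and the $\widehat\phi^{\,i}$ are bounded, uniformly continuous and $C^{2,\mu}$. By the chain rule and the linearity of $L$ the differential inequalities transform into
\[
\widehat L\,\widehat\phi^{\,1}+\widehat g(x,y,\widehat\phi^{\,1})\geq0,\qquad
\widehat L\,\widehat\phi^{\,2}+\widehat g(x,y,\widehat\phi^{\,2})\leq0\qquad\hbox{in }C^{+}_{\pi-\alpha,l},
\]
while $\widehat\phi^{\,1}\leq\widehat\phi^{\,2}$ on $\partial C^{+}_{\pi-\alpha,l}$ and $\ds\limsup\,[\widehat\phi^{\,1}-\widehat\phi^{\,2}]\leq0$ as $\mathrm{dist}\bigl((x,y);\partial C^{+}_{\pi-\alpha,l}\bigr)\to+\infty$, each of these being the corresponding statement for $\phi^{1},\phi^{2}$ on $C^{-}_{\alpha,l}$ pulled back by $\Psi$.

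Finally I would invoke Lemma~\ref{comp.principle} on $C^{+}_{\pi-\alpha,l}$ with the choice $\rho:=1+\delta>0$, so that $[1-\rho,+\infty)=[-\delta,+\infty)$: this is exactly the half-line on which $\widehat g$ is non-increasing, and the hypothesis ``$\phi^{2}\geq1-\rho$'' of Lemma~\ref{comp.principle} reads here $\widehat\phi^{\,2}=-\phi^{1}\circ\Psi\geq-\delta=1-\rho$, which holds precisely because $\phi^{1}\leq\delta$ on $\overline{C^{-}_{\alpha,l}}$. Lemma~\ref{comp.principle} then yields $\widehat\phi^{\,1}\leq\widehat\phi^{\,2}$ on $\overline{C^{+}_{\pi-\alpha,l}}$, i.e.\ $\phi^{1}\circ\Psi\leq\phi^{2}\circ\Psi$; composing with $\Psi^{-1}=\Psi$ gives $\phi^{1}\leq\phi^{2}$ on $\overline{C^{-}_{\alpha,l}}$, which is the claim.

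The only genuinely non-mechanical point is the geometric observation above, namely that reflecting $C^{-}_{\alpha,l}$ across a horizontal line produces $C^{+}_{\pi-\alpha,l}$ — the upper cone with $\pi-\alpha$ in place of $\alpha$ — rather than some $C^{+}_{\alpha,\cdot}$; this is harmless since Lemma~\ref{comp.principle} holds for every angle in $(0,\pi)$. Everything else (symmetry and the unchanged ellipticity constants of $\widehat A$, the Hölder regularity of the conjugated coefficients, and the bookkeeping of which inequality flips sign under $u\mapsto-u$ together with the accompanying $1\leftrightarrow2$ swap) is a direct, if slightly tedious, check that should occupy only a few lines.
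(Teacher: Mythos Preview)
Your proposal is correct and is essentially the same approach as the paper's own treatment: the paper dispenses with Lemma~\ref{comp.principle1} in a single line (``Changing $s$ into $-s$ in Lemma~\ref{comp.principle} leads to the following''), and your argument is exactly a careful unpacking of that symmetry reduction --- reflect the cone across a horizontal line, negate the unknown, swap the roles of sub- and supersolution, and invoke Lemma~\ref{comp.principle} on the resulting upper cone with $\rho=1+\delta$. The only subtlety you flag (that the reflected lower cone is $C^{+}_{\pi-\alpha,l}$ rather than $C^{+}_{\alpha,l}$) is real but, as you note, harmless.
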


\subsection{Proof of Theorem \ref{monotonicity}}
We are now ready to give the proof of Theorem \ref{monotonicity} in details.
\begin{proof}
We denote, for $\tau\in\mathbb R,$
$$\phi^{\tau}(x,y):=\phi(x,y+\tau)\ \hbox{for all}\ (x,y)\in\mathbb R^{2}.$$
Suppose that we have proved that $\phi^\tau\geq\phi$ in $\mathbb R^2$ for all $\tau\geq0.$ Since the coefficients $q$ and $f$ are independent of $y$, then for any $h>0$ the nonnegative function $z(x,y):=\phi^h(x,y)-\phi(x,y)$ is a classical solution (due to (\ref{cequation})) of the following linear elliptic equation
$$\Delta_{x,y}z+(q(x)-c)\partial_yz+b(x,y)z=0\hbox{ in }\mathbb R^2,$$
for some globally bounded function $b=b(x,y)$. It then follows from the strong maximum principle that the function $z$ is either identically $0,$ or positive everywhere in $\mathbb R^2$. Due to the conical limiting conditions (\ref{concon}) satisfied by the function $\phi$, we can conclude that the function $z$ can not be identically $0$. In fact, if $z\equiv0$, then $\phi(x,y+h)=\phi(x,y)$ for all $(x,y)\in\mathbb R^2$ with $h>0.$ This yields that $\phi$ is $h-$periodic with respect to $y$, which is impossible from (\ref{concon}). Hence, the function $z$ is positive everywhere in $\mathbb R^2$, and consequently, the function $\phi$ is increasing in~$y$.

From the discussion above, we only need to prove that $\phi^\tau\geq\phi$ for all $\tau\geq0$:\\ Since
$$\displaystyle{\lim_{l\rightarrow-\infty}\Big(\sup_{(x,y)\in C^{-}_{\alpha,l}}\phi(x,y)\Big)= 0} ~\text{ and }~\displaystyle{\lim_{l\rightarrow\infty}\Big(\inf_{(x,y)\in C^{+}_{\alpha,l}}\phi(x,y)\Big)= 1},$$
there exists then $B>0$, large enough such that, $\hbox{for}~ \tau \geq 2B,$
\begin{equation}\label{slide1}
\left\{\begin{array}{l}
\phi(x,y)\leq \theta,~\hbox{for all} ~(x,y)\in C_{\alpha,-B}^-, \vspace{6 pt }\\
\phi^{\tau}(x,y)\geq 1-\rho,~\hbox{for all}~ (x,y)\in C_{\alpha,-B}^+,\vspace{6 pt }\\
\end{array}\right.
\end{equation}
where $\theta$ (the ignition temperature) is the constant appearing in condition \eqref{cf} on the combustion nonlinearity $f$ and $\rho$ is a  constant we choose such that $1-\rho>\theta$ (recall that $\theta<1$). 

We note that for $\tau\geq 2B,$ 
\[\phi(x,y)\leq \phi^{\tau}(x,y)~\hbox{for all}~ (x,y)\in \partial C_{\alpha,-B}^-\,.\]
Thus, appying Lemma \ref{comp.principle1} to the functions $\phi^1:=\phi$ (notice that $\phi$ is actually at least of class $C^{2,\mu}(\mathbb{R}^2)$ for all $0<\mu\le 1$ from the elliptic regularity theory) and $\phi^2:=\phi^\tau$ with $\tau\ge 2B$ while taking $\delta=\theta$, $A=I$, $g=f$ (which is nonincreasing near $0$), $\tilde{q}(x)=\left(0,q(x)-c\right)$ for all  $x\in \mathbb R$ and $l\,=\,-B,$ we obtain that
\begin{equation}\label{on.lower.cone}
\forall\,\tau\geq2B,~~ \phi(x,y)\leq \phi(x,y+\tau)~\hbox{ for all }~(x,y)\in C_{\alpha,-B}^-.
\end{equation}
We turn now to compare $\phi$ to $\phi^{\tau}$ on $C_{\alpha,-B}^+.$ For $\tau\geq 2B,$ we have $\phi^{\tau}\geq 1-\rho$ in $C_{\alpha,-B}^+$ (see \eqref{slide1}) and $\phi^{\tau}(x,y)\geq 1-\rho>\theta \geq \phi(x,y)$ on $\partial C_{\alpha,-B}^+.$ Thus, by Lemma \ref{comp.principle}, we get 
\begin{equation}\label{on.upper.cone}
\forall\,\tau\geq2B,~~  \phi(x,y+\tau)\geq \phi(x,y)~\hbox{ for all }~(x,y)\in C_{\alpha,-B}^+.\end{equation}
From \eqref{on.lower.cone} and \eqref{on.upper.cone} we get that the inequality holds everywhere in $\R^{2}.$ That is, 
\begin{equation}\label{everywhere}
\forall\,\tau\geq2B,~~  \phi(x,y+\tau)\geq \phi(x,y)~\hbox{ for all }~(x,y)\in \R^{2}.\end{equation}

Let us now decrease $\tau$ and set
$$\tau^{*}=\inf\left\{\tau>0,\phi(x,y)\leq\phi^{\tau'}(x,y)\ \hbox{for all}\ \tau'\geq\tau\ \hbox{and for all}\ (x,y)\in\mathbb R^2\,\right\}.$$
The proof of the theorem will be complete once we prove that $\tau^*=0$. We argue by contradiction and assume  that $\tau^*>0$. First, we  note that $\tau^{*}\leq2B$ and, by continuity, we have $\phi\leq\phi^{\tau^*}~\hbox{in}~\mathbb{R}^2.$ Denote by 
$$S:=C_{\alpha,-B}^+\setminus C_{\alpha,B}^{+}$$
the slice located between the ``lower cone'' $C_{\alpha,-B}^{-}$ and the ``upper cone'' $C_{\alpha,B}^+$. Then, for the value of $\displaystyle{\sup_{(x,y)\in \overline{S}}\left(\phi(x,y)-\phi^{\tau^*}(x,y)\right)}$, the following two cases may occur.\hfill\break

\paragraph{\textit{Case 1:}} suppose that
$$\displaystyle{\sup_{(x,y)\in \overline{S}}\left(\phi(x,y)-\phi^{\tau^*}(x,y)\right)<0}.$$
Since the function $\phi$ is (at least) uniformly continuous, there exists $\varepsilon>0$ such that $0<\varepsilon<\tau^*$ and the above inequality holds for all $\tau\in[\tau^*-\varepsilon,\tau^*]$. Then, for any $\tau$ in the interval~$[\tau^*-\varepsilon,\tau^*]$, due to (\ref{on.lower.cone}) and the definition of $S$, we get that
$$\phi(x,y)\leq \phi^\tau(x,y)\hbox{ over }\overline{C_{\alpha,B}^{-}}.$$
Hence, $\phi\leq \phi^\tau$ over $\partial C_{\alpha,B}^{+}$. On the other hand, since $\tau\geq \tau^*-\varepsilon>0$ and $\phi\geq 1-\eta$ over~$\overline{C_{\alpha,B}^{+}}$, we have $\phi^\tau\geq1-\eta$ over $\overline{C_{\alpha,B}^{+}}$. Lemma \ref{comp.principle}, applied to $\phi$ and $\phi^{\tau}$ in $C_{\alpha,B}^{+},$ yields that
$$\phi(x,y)\leq\phi^{\tau}(x,y)\ \hbox{for all}\ (x,y)\in\overline{ C_{\alpha,B}^{+}}.$$
As a consequence, we obtain $\phi\leq\phi^{\tau}$ in $\mathbb{R}^2$ which  contradicts the minimality of $\tau^{*}.$ Therefore, case 1 is ruled out.
\paragraph{\textit{Case 2:}} suppose that $$\displaystyle{\sup_{(x,y)\in \overline{S}}\left(\phi(x,y)-\phi^{\tau^*}(x,y)\right)=0}.$$
Then, there exists a sequence of points $\{(x_n,y_n)\}_{n\in\mathbb N}$ in $\overline{S}$ such that
\begin{equation}\label{at x_n,y_n}
\phi(x_n,y_n)-\phi^{\tau^*}(x_n,y_n)\rightarrow 0~\hbox{as}~n\rightarrow+\infty.
\end{equation}

For each $n\in\mathbb{N},$ call $\phi_n(x,y)=\phi(x+x_n,y+y_n)\hbox{ and }\phi^{\tau^*}_n(x,y)=\phi^{\tau^*}(x+x_n,y+y_n),$ for all $(x,y)\in \mathbb R^2.$ From the regularity of $\phi$, and up to extraction of some subsequence, the functions $\phi_{n}$ and $\phi^{\tau^*}_n$ converge in $C^{2}_{loc}(\mathbb R^2)$ to two functions $\phi_{\infty}$ and $\phi^{\tau^*}_\infty$. On the other hand, since $q$ is globally $C^{1,\delta}\left(\mathbb R\right)$ and is $L-$periodic, we can assume that the functions $q_{n}(x)=q(x+x_n)$ converge uniformly in $\mathbb R$ to a globally $C^{1,\delta}\left(\mathbb R\right)$ function~$q_{\infty}$ as $n\rightarrow+\infty.$

For any $(x,y)\in\mathbb R^2$, set $z(x,y)=\phi_\infty(x,y)-\phi^{\tau^{*}}_\infty(x,y)$. The function $z$ is nonpositive because $\phi\leq\phi^{\tau^*}$ in $\mathbb R^{2}$. Moreover, by passing to the limit as $n\rightarrow+\infty$ in (\ref{at x_n,y_n}), we obtain $z(0,0)=0$. Furthermore, since the function $q$ does not depend on $y$, we know that the function $z$ solves the following linear elliptic equation
$$\Delta_{x,y}z+(q_{\infty}(x)-c)\partial_{y}z+b(x,y) z=0\hbox{ in }\mathbb R^2$$
for some globally bounded function $b(x,y)$ (since $f$ is Lipschitz continuous). Then, the strong elliptic maximum principle implies that either $z>0$ in $\mathbb R^2$ or $z=0$ everywhere in~$\mathbb R^2$. In fact, the latter case is impossible because it contradicts with the conical conditions at infinity (\ref{concon}): indeed, since $(x_n,y_n)\in \bar S$ for all $n\in\mathbb N$, it follows from (\ref{concon}) that $\lim_{y\rightarrow+\infty}\phi_{\infty}(0,y)=1$ and $\lim_{y\rightarrow-\infty}\phi_{\infty}(0,y)=0$, whence the function $\phi_{\infty}$ cannot be $\tau^*$-periodic with respect to $y$, with $\tau^*>0$. Thus, we have $z(x,y)>0$ in $\mathbb R^2$. But, that contradicts with $z(0,0)=0$. So, case 2 is ruled out too.

Finally, we have proved that $\tau^*=0$, which means that $\phi\leq\phi^{\tau}$ for all $\tau\geq0$. Then, it follows from the discussion in the beginning of this proof that the function $\phi$ is increasing in $y$. This completes the proof of Theorem \ref{monotonicity}.
\end{proof}

\subsection{Theorem \ref{existence}: Proof of uniqueness of conical fronts up to a shift in $t$}
The proof of uniqueness of solutions, up to a shift, uses the same techniques as those used above in the proof of monotonicity.    We will do it here for the sake of completeness. 

Suppose that $u^{1}(t,x,y)=\Phi^{1}(x,y+c_{1}t)$ and $u^{2}(t,x,y)=\Phi^{2}(x,y+c_{2}t)$ are both solutions to equation \eqref{cequation} with $c=c_{1}$ and $c=c_{2}$ respectively, and that $\Phi^{1}$ and $\Phi^{2}$ satisfy the 
conical limiting conditions 
\begin{equation}\label{limits.phi}
\displaystyle{\lim_{l\rightarrow-\infty}}\Big(\displaystyle{\sup_{(x,y)\in C^{-}_{\alpha,l}}}\Phi^{1}(x,y)\Big)= 0,\quad  \quad 
\displaystyle{\lim_{l\rightarrow\infty}\Big(\inf_{(x,y)\in C^{+}_{\alpha,l}}\Phi^{1}(x,y)\Big)= 1,}\end{equation}
\begin{equation}\label{limits.PHI}
\displaystyle{\lim_{l\rightarrow-\infty}}\Big(\displaystyle{\sup_{(x,y)\in C^{-}_{\alpha,l}}}\Phi^{2}(x,y)\Big)= 0\quad \text{and} \quad 
\displaystyle{\lim_{l\rightarrow\infty}\Big(\inf_{(x,y)\in C^{+}_{\alpha,l}}\Phi^{2}(x,y)\Big)= 1.}\end{equation}

We can assume, without loss of generality, that $c_{1}\leq c_{2}.$ From Theorem \ref{monotonicity}, we know that $\Phi^{1}$ and $\Phi^{2}$ satisfy $\partial_{2}\Phi_{i}>0$ in $\R^{2},$ for $i=1,2$. The functions $\Phi^{i},$ $i=1,2,$ satisfy \begin{equation}\label{Phi^{i}}\Delta\Phi^{i}+(q(x)-c^{i})\partial_{y}\Phi^{i}+f(\Phi^{i})=0 \;\hbox{ for all }\;(x,y)\in\mathbb{R}^2.\end{equation}
As $\Phi^{1}$ is increasing in its second variable, we then have 
\begin{equation}\label{Phi1}
\Delta\Phi^{1}+(q(x)-c^{2})\partial_{y}\Phi^{1}+f(\Phi^{1})=(c^{1}-c^{2})\partial_{y}\Phi^{1}\leq 0 \;\hbox{ for all }\;(x,y)\in\mathbb{R}^2
\end{equation}
while 
\begin{equation}\label{Phi2}
\Delta\Phi^{2}+(q(x)-c^{2})\partial_{y}\Phi^{2}+f(\Phi^{2})= 0 \;\hbox{ for all }\;(x,y)\in\mathbb{R}^2
\end{equation}

The idea is to slide the function $\Phi^{1}$ with respect to $\Phi^{2}.$   First, we note that \eqref{Phi1} holds for $\Phi^{1,\tau}(x,y):=\Phi^{1}(x,y+\tau)$ as the PDE is invariant with respect to translations in the $y$-variable. Then from \eqref{limits.phi} and \eqref{limits.PHI}, there exists $B>0$ large enough such that, $\hbox{for}~ \tau \geq 2B,$
\begin{equation}\label{slide.phi.1}
\left\{\begin{array}{l}
\Phi^{2}(x,y)\leq \theta,~\hbox{for all} ~(x,y)\in C_{\alpha,-B}^-, \vspace{6 pt }\\
\Phi^{1,\tau}(x,y)\geq 1-\rho,~\hbox{for all}~ (x,y)\in C_{\alpha,-B}^+,\vspace{6 pt }\\
\end{array}\right.
\end{equation}
where  $\theta$ (the ignition temperature) is the constant appearing in condition \eqref{cf} on the combustion nonlinearity $f$ and $\rho$ is a  constant we choose such that $1-\rho>\theta$ (recall that $\theta<1$). Thus we have $\Phi^{2}\leq \Phi^{1,\tau}$ on $\partial C^{-}_{\alpha,-B}.$ Applying Lemma \ref{comp.principle1} we then obtain 
\begin{equation}\label{on.lower.cone1}
\forall\,\tau\geq2B,~~ \Phi^{2}(x,y)\leq \Phi^{1}(x,y+\tau)~\hbox{ for all }~(x,y)\in C_{\alpha,-B}^-.
\end{equation}
Now, we compare $\Phi^{2}$ and $\Phi^{1,\tau}$ on $C_{\alpha,-B}^+.$ For $\tau\geq 2B,$ we have $\Phi^{1,\tau}\geq 1-\rho$ in $C_{\alpha,-B}^+$ and $\Phi^{1,\tau}(x,y)\geq 1-\rho>\theta \geq \Phi^{2}(x,y)$ on $\partial C_{\alpha,-B}^+.$ Thus, by Lemma \ref{comp.principle} we get 
\begin{equation}\label{on.upper.cone1}
\forall\,\tau\geq2B,~~ \Phi^{1,\tau}(x,y)= \Phi^{1}(x,y+\tau)\geq \Phi^{2}(x,y)~\hbox{ for all }~(x,y)\in C_{\alpha,-B}^+.\end{equation}
From \eqref{on.lower.cone1} and \eqref{on.upper.cone1} we get that the inequality holds everywhere in $\R^{2}.$ That is, 
\begin{equation}\label{everywhere}
\forall\,\tau\geq2B,~~  \Phi^{1,\tau}(x,y)\geq \Phi^{2}(x,y)~\hbox{ for all }~(x,y)\in \R^{2}.\end{equation}
Now we start to decrease $\tau$ by setting
\begin{equation}\label{define.inf}\tau^{*}=\inf\left\{\tau>0,\Phi^{2}(x,y)\leq\Phi^{1,\tau'}(x,y)\ \hbox{for all}\ \tau'\geq\tau\ \hbox{and for all}\ (x,y)\in\mathbb R^2\,\right\}.\end{equation}
The proof of the uniqueness of solutions, up to a shift, will be complete once we prove that $\tau^*=0$. As in the previous proof, we argue by contradiction and assume  that $\tau^*>0$.  We  note that $\tau^{*}\leq2B$ and, by continuity, we have $\Phi^{2}\leq\Phi^{1,\tau^*}~\hbox{in}~\mathbb{R}^2.$ Denote by 
$$S:=C_{\alpha,-B}^+\setminus C_{\alpha,B}^{+}$$
the slice located between the ``lower cone'' $C_{\alpha,-B}^{-}$ and the ``upper cone'' $C_{\alpha,B}^+$. Then, for the value of $\displaystyle{\sup_{(x,y)\in \overline{S}}\left(\Phi^{2}(x,y)-\Phi^{1,\tau^*}(x,y)\right)}$, the following two cases may occur.\hfill\break
\paragraph{\textit{Case 1:}} suppose that
$$\displaystyle{\sup_{(x,y)\in \overline{S}}\left(\Phi^{2}(x,y)-\Phi^{1,\tau^*}(x,y)\right)<0}.$$
As $\Phi^{1}$ and $\Phi^{2}$ are continuous, there exists $\eta>0$  such that 
$$\displaystyle{\sup_{(x,y)\in \overline{S}}\left(\Phi^{2}(x,y)-\Phi^{1,\tau}(x,y)\right)<0}$$
for any $\tau\in[\tau^{*}-\eta,\tau^{*}].$ Choose any $\tau \in[\tau^{*}-\eta,\tau^{*}]$
and apply Lemma \ref{comp.principle1} to $\Phi^{1,\tau}$ and $\Phi^{2}$  on $C_{\alpha,-B}^{-}$ to conclude that \[\Phi^{2}(x,y)\leq\Phi^{1,\tau}(x,y)~\text{ for all }~(x,y)\in C_{\alpha,-B}^{-}.\]
As $\Phi^{2}\leq \Phi^{1,\tau}$ in $\overline{S},$ we have $\Phi^{2}(x,y)\leq \Phi^{1,\tau}(x,y)$ for $(x,y)\in \partial C^{+}_{\alpha,B}.$ Moreover, since $\partial_{y}\Phi^{1,\tau}>0,$ then it follows from \eqref{slide.phi.1} that $\Phi^{1,\tau}(x,y)\geq 1-\rho$ in $C_{\alpha,B}^{+}.$ We apply the comparison principle in Lemma \ref{comp.principle} to obtain that \[\Phi^{2}\leq \Phi^{1,\tau} \text{ in }C_{\alpha,B}^{+}.\]
Thus, we have \[\Phi^{2}\leq \Phi^{1,\tau} \text{ in }\R^{2}.\]
This however contradicts the definition of $\tau^{*}$ in \eqref{define.inf} as an infimum. Therefore, Case 1 is ruled out and we are left with the following. 

\paragraph{\textit{Case 2:}}  $\displaystyle{\sup_{(x,y)\in \overline{S}}\left(\Phi^{2}(x,y)-\Phi^{1,\tau^*}(x,y)\right)=0}.$

\noindent Then, there exists a sequence of points $\{(x_n,y_n)\}_{n\in\mathbb N}$ in $\overline{S}$ such that
\begin{equation}\label{at x_n,y_n}
\Phi^{2}(x_n,y_n)-\Phi^{1,\tau^*}(x_n,y_n)\rightarrow 0~\hbox{as}~n\rightarrow+\infty.
\end{equation}
For each $n\in\mathbb{N},$ call \[\Phi^{2}_n(x,y)=\Phi^{2}(x+x_n,y+y_n)\hbox{ and }\Phi^{1,\tau^*}_n(x,y)=\Phi^{1,\tau^*}(x+x_n,y+y_n),\] for all $(x,y)\in \mathbb R^2.$ From the regularity of $\Phi^{2}$ and $\Phi^{1}$, and up to extraction of some subsequence, the functions $\Phi^{2}_{n}$ and $\Phi^{1,\tau^*}_n$ converge in $C^{2}_{loc}(\mathbb R^2)$ to two functions $\Phi^{2}_{\infty}$ and $\Phi^{1,\tau^*}_\infty$. On the other hand, since $q$ is globally $C^{1,\delta}\left(\mathbb R\right)$ and is $L-$periodic, we can assume that the functions $q_{n}(x)=q(x+x_n)$ converge uniformly in $\mathbb R$ to a globally $C^{1,\delta}\left(\mathbb R\right)$ function~$q_{\infty}$ as $n\rightarrow+\infty.$ Moreover, by passing to the limit as $n\rightarrow+\infty$ in (\ref{at x_n,y_n}), we obtain $\Phi_{\infty}^{1,\tau^{*}}(0,0)=\Phi_{\infty}^{2}(0,0)$.

Now we return to the variables $(t,x,y)$ and denote by \[z(t,x,y)=\Phi_{\infty}^{1}(x,y+c_{1}t+\tau^{*})-\Phi^{2}_{\infty}(x,y+c_{1}t)\]
It follows that $z(t=0,x=0,y=0)=0$ and, from \eqref{Phi^{i}}, we have 
\begin{equation}\label{Phi2.infty}
\Delta\Phi_{\infty}^{2}+(q(x)-c^{2})\partial_{y}\Phi_{\infty}^{2}+f(\Phi_{\infty}^{2})= 0 \;\hbox{ for all }\;(x,y)\in\mathbb{R}^2
\end{equation}
and 
\begin{equation}\label{Phi1.infty}
\Delta\Phi_{\infty}^{1,\tau^{*}}+(q(x)-c^{1})\partial_{y}\Phi_{\infty}^{1,\tau^{*}}+f(\Phi_{\infty}^{1,\tau^{*}})= 0 \;\hbox{ for all }\;(x,y)\in\mathbb{R}^2.
\end{equation}
Thus,
\begin{equation}\label{zeq}
\partial_{t}z-\Delta z-q(x)\partial_{y}z\leq f(\Phi_{\infty}^{1}(x,y+c_{1}t+\tau^{*}))-f(\Phi^{2}_{\infty}(x,y+c_{1}t))
\end{equation}
Again, since $f$ is Lipschitz-continuous, there exists a bounded function $b(t,x,y)$ such that
\[\partial_{t}z-\Delta z-q(x)\partial_{y}z+b(t,x,y)\,z=0~\hbox{ for all }~(t,x,y)\in \R^{3},\]
with $z(0,0,0)=0.$ The strong parabolic maximum principle, applied to the last PDE, yields that $z\equiv 0$ for all $t\leq 0$ and $(x,y)\in \R^{2}.$  This leads to \[\Phi_{\infty}^{1}(x,y+\tau^{*})=\Phi^{2}_{\infty}(x,y)~\hbox{ for all }~(x,y)\in \R^{2}.\] Putting this into \eqref{Phi1.infty}, we obtain that  $(c^{1}-c^{2})\partial_{y}\Phi_{\infty}^{2}\equiv 0.$ As $\Phi^{2}$ is increasing in $y$, we must then have $c^{2}=c^{1}.$ Now, the strong elliptic maximum principle and the equations \eqref{Phi1} and \eqref{Phi2} yield that $\Phi^{1}(x,y+\tau)\equiv \Phi^{2}(x,y).$ Therefore, 
\[u^{1}(t+\frac{\tau^{*}}{c},x,y)=u^{2}(t,x,y)~\hbox{ for all }~(t,x,y)\in \R^{3},\] where $c=c^{1}=c^{2}.$ This completes the proof.


\begin{thebibliography}{99}
\bibitem{amann1}
{\sc H.~Amann}, {\em On the existence of positive solutions of nonlinear
  elliptic boundary value problems}, Indiana Univ. Math. J., 21 (1971/72),
  pp.~125--146.

\bibitem{bh02}
{\sc H.~Berestycki and F.~Hamel}, {\em Front propagation in periodic excitable
  media}, Comm. Pure Appl. Math., 55 (2002), pp.~949--1032.

\bibitem{BerNir}
{\sc H.~Berestycki and L.~Nirenberg}, {\em On the method of moving planes and
  the sliding method}, Bol. Soc. Brasil. Mat. (N.S.), 22 (1991), pp.~1--37.

\bibitem{BonnetHamel}
{\sc A.~Bonnet and F.~Hamel}, {\em Existence of nonplanar solutions of a simple
  model of premixed {B}unsen flames}, SIAM J. Math. Anal., 31 (1999),
  pp.~80--118.

\bibitem{BuWang1}
{\sc Z.-H. Bu and Z.-C. Wang}, {\em Curved fronts of monostable
  reaction-advection-diffusion equations in space-time periodic media}, Commun.
  Pure Appl. Anal., 15 (2016), pp.~139--160.

\bibitem{BuWangZhi}
\leavevmode\vrule height 2pt depth -1.6pt width 23pt, {\em Stability of
  pyramidal traveling fronts in the degenerate monostable and combustion
  equations {I}}, Discrete Contin. Dyn. Syst., 37 (2017), pp.~2395--2430.

\bibitem{ehh}
{\sc M.~El~Smaily, F.~Hamel, and R.~Huang}, {\em Two-dimensional curved fronts
  in a periodic shear flow}, Nonlinear Anal., 74 (2011), pp.~6469--6486.

\bibitem{min-max}
{\sc M.~I. El~Smaily}, {\em Min-max formulae for the speeds of pulsating
  travelling fronts in periodic excitable media}, Ann. Mat. Pura Appl. (4), 189
  (2010), pp.~47--66.

\bibitem{HMonneau}
{\sc F.~Hamel and R.~Monneau}, {\em Solutions of semilinear elliptic equations
  in {$\bold R^N$} with conical-shaped level sets}, Comm. Partial Differential
  Equations, 25 (2000), pp.~769--819.

\bibitem{HamelMonneauRoquejoffre04}
{\sc F.~Hamel, R.~Monneau, and J.-M. Roquejoffre}, {\em Stability of travelling
  waves in a model for conical flames in two space dimensions}, Ann. Sci.
  {\'E}cole Norm. Sup. (4), 37 (2004), pp.~469--506.

\bibitem{HamelMonneauRoquejoffre05}
\leavevmode\vrule height 2pt depth -1.6pt width 23pt, {\em Existence and
  qualitative properties of multidimensional conical bistable fronts}, Discrete
  Contin. Dyn. Syst., 13 (2005), pp.~1069--1096.

\bibitem{HamelZlatos}
{\sc F.~Hamel and A.~Zlato\v{s}}, {\em Speed-up of combustion fronts in shear
  flows}, Math. Ann., 356 (2013), pp.~845--867.

\bibitem{Taniguchi2005}
{\sc H.~Ninomiya and M.~Taniguchi}, {\em Existence and global stability of
  traveling curved fronts in the {A}llen-{C}ahn equations}, J. Differential
  Equations, 213 (2005), pp.~204--233.

\bibitem{Taniguchi2006}
\leavevmode\vrule height 2pt depth -1.6pt width 23pt, {\em Global stability of
  traveling curved fronts in the {A}llen-{C}ahn equations}, Discrete Contin.
  Dyn. Syst., 15 (2006), pp.~819--832.

\bibitem{Noussair}
{\sc E.~S. Noussair}, {\em On the existence of solutions of nonlinear elliptic
  boundary value problems}, J. Differential Equations, 34 (1979), pp.~482--495.

\bibitem{Sivashinsky1}
{\sc G.~I. Sivashinsky}, {\em The diffusion stratification effect in bunsen
  flames}, Journal of Heat Transfer, 96 (1974), pp.~530--535.

\bibitem{Sivashinsky}
{\sc G.~I. Sivashinsky}, {\em Structure of bunsen flames}, The Journal of
  Chemical Physics, 62 (1975), pp.~638--643.

\bibitem{zlatosTaoZhu}
{\sc T.~Tao, B.~Zhu, and A.~Zlato\v{s}}, {\em Transition fronts for
  inhomogeneous monostable reaction-diffusion equations via linearization at
  zero}, Nonlinearity, 27 (2014), pp.~2409--2416.

\bibitem{BuWang2}
{\sc Z.-C. Wang and Z.-H. Bu}, {\em Nonplanar traveling fronts in
  reaction-diffusion equations with combustion and degenerate {F}isher-{KPP}
  nonlinearities}, J. Differential Equations, 260 (2016), pp.~6405--6450.

\bibitem{Williams}
{\sc F.~A. Williams}, {\em Combustion Theory (Combustion Science and
  Engineering Series)}, Reading, MA: Addison-Wesley, 1985.

\end{thebibliography}
\end{document}